\definecolor{gray}{gray}{0.5}
\newcommand\Z{\mathbb{Z}}
\newcommand\Q{\mathbb{Q}}
\newtheorem{lemma}{Lemma}[section]
\newtheorem{proposition}[lemma]{Proposition}
\newtheorem{theorem}[lemma]{Theorem}
\newtheorem{corollary}[lemma]{Corollary}
\newtheorem{example}[lemma]{Example}
\newtheorem{maintheorem}{Theorem}
\theoremstyle{definition}
\newtheorem{remark}[lemma]{Remark}
\newtheorem{definition}[lemma]{Definition}
\newtheorem{question}[lemma]{Question}
\font\manfnt=manfnt
\newcommand\twoheaddownarrow{\hbox to 0pt{\raisebox{0.3ex}{$\downarrow$}}
  \hbox to 0pt{\raisebox{-0.2ex}{$\downarrow$}}\phantom\downarrow}
\begin{document}

\title{Iterated identities and iterational depth of groups}

\author{Anna Erschler}
\address{A.E.: C.N.R.S., DMA, ENS, Paris, France}

\thanks{The work is supported by the ERC  grant 257110
  ``RaWG'' and  the ANR ``DiscGroup: facettes des groupes discrets'' }

\begin{abstract} Given word on $n$ letters, we study groups which satisfiy "iterated identity" $w$, meaning that for all $x_1, \dots, x_n$ there exists $m$ such that $m$-the iteration of $w$ of Engel type, applied to $x_1, \dots, x_n$, is equal to the identity. We define bounded groups  and  groups which are fractal with respect to identities. This notion of being fractal  can be viewed as a self-similarity conditions for the set of identities, satisfied by a group.   In contrast with torsion groups and  Engel groups, groups which are fractal with respect to identities appear among finitely generated elementary amenable groups.
We prove that  any polycyclic, as well as any metabelian group is  bounded  and we compute the iterational depth for various  wreath products.
We study the set of iterated identities, satisfied by a given group, which is not necessarily a subgroup of a free group and not necessarily invariant under conjugation, in contrast with usual identities.
Finally, we discuss another notion of iterated identities of groups, which we call solvability type iterated identities, and its relation  to elementary classes of varieties of groups.

\end{abstract}

\maketitle


\section{Introduction}

Given a word $w=w(x_1, \dots, x_n)$ on $n$ letters, we recall that a group $G$ satisfies an identity $w$ if for any $y_1, \dots, y_n \in G$ it holds
$w(y_1, \dots, y_n)$. A well-known identity on one single letter is $w(x)=x^p$. For $p$ being large enough, it is known that
there are infinite finitely generated groups satisfying this identity:
 that is,  infinite finitely generated groups such that $g^p=e$ for any $g\in G$.
A larger class of groups consists of groups that are $p$-groups, that is, $G$ are such that for any $g\in G$ there exists $k$ such that $g^{p^k}=e$.
A first group of this kind was constructed by Golod (\cite{golod}), further examples include  Grigorchuk group \cite{grigorchukfirstgroup}, Gutpa-Sidki groups \cite{guptasidki});
a large family of $p$-torsion groups of unbounded torsion appear as quotient groups of hyperbolic groups (see Gromov \cite{gromov}).
The groups above do not satisfy the identity $w=x^p$, that is, there exist $g\in G$ such that $w(g)\ne e$, but they  satisfy some iteration of
$w$: for any $g\in G$ there exists $k$ such that $w(w(\dots w(g )\dots))=e$.

What happens with other identities $w$ if we consider groups that satisfy some iteration of $w$? $w(x)=x^k$ being the only identity in one letter, so we are mostly interested in the case where $n\ge 2$. In this situation, in contrast with one letter case, there are several possible notion of "iterated identity", some of which we discuss in this paper.

Before giving the definitions, we mention possible motivation to study iterated identities of groups:

 In the case when a group satisfying an identity $w$ are well-understood, the groups satisfying an iterated identity $w$ can be much more complicated:
consider e.g. $w(x_1,x_2)=x_1x_2x_1^{-1}x_2^{-1}$; groups satisfying $w$ are Abelian, whereas the structure of Engel groups 
 is far from being understood;
in the case when groups satisfying an identity are already tricky, like Burnside groups satisfying an identity $w=x^p$, the groups satisfying the iteration of the identity form a larger, and possibly even trickier, class of groups. 

Moreover, it might be possible to construct groups satisfying additional natural properties  among groups satisfying iterated identities, such that groups satisfying the same identity do not allow: for example,  there exist {\it residually finite} infinite finitely generated torsion groups (\cite{golod, aleshin, grigorchukfirstgroup, guptasidki},) and residually finite  infinite finitely generated non-nilpotent Engel group (Golod, \cite{golod}), while any finitely generated residually finite torsion group of bounded torsion is finite (follows  the solution by Zelmanov
of so called restricted Burnside problem \cite{zelmanov});
 any finitely generated residually finite group that satisfies a fixed bounded iteration of the identity $ab=ba$ is nilpotent (Wilson, \cite{wilsonconditions}, Theorem 2, the proof
 uses classification of finite simple groups).

 Given a group $G$, it seems interesting to describe all iterated identities that $G$ satisfies.
It may happen that the group does not satisfy any non-trivial identity, but does satisfy non-trivial iterated identities
(e.g. first Grigorchuk group and  many other groups acting on a rooted tree, does not satisfy any identity by a result of Abert \cite{abert}; yet it is well known that this group is a torsion $2$ group, that is, satisfy the iteration of $w(x_1)=x_1^2$ \cite{grigorchukfirstgroup}).
The set of iterated identities on $n$ of variables does  not necessarily form a subgroup of the free group $F_n$ (see Example \ref{examplenotproduct}) and it is not necessarily
invariant with respect to conjugation (see Example \ref{conjugatenotidentity}), in contrast with usual identities, which is one of the difficulties to study this set. There are various invariants of groups which can be defined in terms of the set of all iterated identities satisfied by a group, such as boundedness and fractalness with respect to identities, which we discuss in this paper.

 In some situations iterated identities and iterated quasi-varieties provide rougher invariants, for example there are various iterated identities, satisfied by groups which do not satisfy any non-trivial identity.
In other situations iterated identities
and iterated quasi-varieties can  provide  softer invariants: it may happen that two groups $G$ and $H$ that generate distinct varieties of groups,
but iterated quasi-varities generated by these groups (both Engel type iterated quasi-varieties for a fixed (small) number of variables which hold in a given group as well as solvability  type iterated quasi-varities, which we define
in Section \ref{sectionfurthernotions}) are the same;

Let $w=w(x_1, \dots, x_n)$ be a word on $n$ letters, $n\ge 1$.

\begin{definition} \label{definitionEngel}
We say that a group $G$ satisfies $E$-type (or Engel type) iterated identity $w$ if
for any $x_1, \dots, x_n \in G$ there exists $N$ such that  $n$-th iteration of $w$ on the first variable
$$
w_{\circ N} (x_1, \dots, x_n) =w(w(\dots(w(x_1, x_2, \dots, x_n), x_2, \dots, x_n), x_2, \dots, x_n))=e.
$$

\end{definition}

Some iterated $E$ type identities  on two variables are a particular case of a closely related definition of "correct sequences" in terminology of \cite{bandmanandco, plotkinural, guralnikplotkinshalev}:
 a sequence $u = u_1,u_2,u_3,\dots ,u_n,\dots$  of elements from  a free groups on two variables $F_2$ is said to be {\it correct} if the following conditions hold:
(i) For every group $G$ and elements $a,g \in G$, we have $u_n(a,1) = e$ and $u_n(1,g) = e$ for all sufficiently large $n$.
(ii) if $a,g$ are elements of $G$ such that $u_n(a,g) = 1$, then for every $m > n$ it holds $u_m(a,g) = e$.

The words we consier in our definition \ref{definitionEngel}  are not necessarily  products of commutators, and thus do not need in general to satisfy (ii).
It is also essential that in definition \ref{definitionEngel} we do not specify first values of the sequences of iterations.

 In \cite{brandlwilson, braywilsonwilson, bandmanandco, ribnere} one describes some sequences of iteration of an identity   which characterize finite solvable groups (we recall these results   in subsection \ref{subsolvable}).
 The situation with infinite groups is essentially different. It is clear for example that no finite set of iterated identities can characterize infinite solvable groups.
 Instead of searching some iterated identity, we are interested in properties of all iterated identities which hold in a given group or a class of groups, finite or infinite, and we are mostly interested in finitely generated infinite groups.

It seems particular interesting to descibe iterated identities for some class of groups, that have some other manifestations of self-similarity, such as
 Grigorchuk groups;
 Golod groups \cite{golod};
Basilica group and other iterated monodromy groups \cite{nekrashevychbook}; branch groups \cite{grigorchuk2000}.

But it seems also interesting to understand the structure of iterated identities for groups without natural self-similarity structure, for example free and not free groups of various varieties.

The questions we want to address: which iterated identities hold for a given group or class of group? Are "quasi-varitety" defined by a set of iterated varieties of two given groups coincide?
For which groups it is essential to consider iterated identities rather than identities, that is, in which cases it is essential to allow unbounded number of iteration for a given identity:
we say that the iterated identity is bounded if this guaranteed by a bounded number of iteration. We say that the group is bounded, if it has at least one non-trivial iterated identity, all of its iterated identities are bounded and, moreover, the number of iteration for each of them is bounded by a common upper bound (see definitions \ref{defidbounded} and
\ref{defbounded} for more details).

The paper has the following structure.
Section \ref{sectionbasic} contains preliminaries about iterated identities. We give first examples of iterated identities. 
 In Example \ref{examplenotproduct} we show  that a product of
iterated identities  (in a symmetric group) is not necessarily an iterated identity.

In Section \ref{sectionbounded} we define iterational depth of an iterated identity and iterational depth of a group, as well as bounded groups and  groups which are fractal with respect to identities.

In Section \ref{sectionfractalexamples} we provide examples of finitely generated elementary amenable groups which are fractal with respect to identities (see 
Example \ref{exampleelementaryamenable}). We show that a conjugate of an iterated identity of a finitely generated elementary amenable group is not necessarily an iterated identity of this group (see Example  \ref{conjugatenotidentity}).

In Section \ref{sectionnormalsubgroups} we discuss the behavior of iterational depth with respect to quotients. We show in particular that any polycyclic group is bounded.

In Section \ref{sectionwreath} we compute the iterational depth of  wreath products of cyclic groups. We show that

-- The iterational depth of $\Z \wr \Z$ is equal to $2$ (see Proposition \ref{propositionwithz})

-- The iterational depth of     $\Z \wr \Z/R \Z$ is equal to $k+1$, $k$ being the maximum of $\alpha_i$, where $  R= \prod p_i ^{\alpha_i}$, $p_i$ are prime numbers, $\alpha_i \in \Z_+$ ( see Proposition \ref{propositionzwithfinite}).

 We show that a subgroup of a finitely generated bounded group is not always bounded.

In Section \ref{sectionmetabelian} we prove

\begin{maintheorem} [Theorem   \ref{theoremmetabelian}  ]
Any finitely generated metabelian group is bounded.
\end{maintheorem}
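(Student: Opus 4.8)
The plan is to follow an iterated identity through the metabelian structure and reduce everything to a statement about a finitely generated module over a Noetherian group ring. Write $A=[G,G]$, which is abelian, and $Q=G/A$, which is finitely generated abelian; by a theorem of P.~Hall, $A$ is a finitely generated module over the Noetherian ring $R=\Z[Q]$. That $G$ has a non-trivial iterated identity is clear, since the metabelian law $[[x_1,x_2],[x_3,x_4]]$ is an honest identity of $G$; so the real content is a \emph{uniform} bound: one must produce $N=N(G)$, not depending on $w$, so that every $E$-type iterated identity $w=w(x_1,\dots,x_n)$ of $G$ satisfies $w_{\circ N}\equiv e$ in $G$. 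If $Q$ is finite then $R$ is finite over $\Z$, so $A$ is a finitely generated abelian group and $G$ is polycyclic; the bound then follows from the result, proved earlier, that polycyclic groups are bounded. So I assume from now on that $Q$ is infinite.

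First I would run the elementary analysis in $Q$. Let $a,b_2,\dots,b_n$ denote the exponent sums of $x_1,\dots,x_n$ in $w$; iterating $w$ on the first variable inside $Q$ turns $\bar g$ into $a^m\bar g+(1+a+\dots+a^{m-1})\sum_{i\ge 2}b_i\bar x_i$. Substituting an element of infinite order of $Q$ for $\bar g$ with trivial values elsewhere forces $a=0$, and the same substitution in each remaining slot then forces every $b_i=0$. Hence $w\in[F_n,F_n]$, so $w(y,x_2,\dots,x_n)\in A$ for \emph{every} $y\in G$. Moreover, for $h\in A$ one may collect the now $A$-valued occurrences of the first variable (the subgroup $A$ being normal and abelian), and this exhibits $\phi=\phi_{w,\vec x}\colon h\mapsto w(h,x_2,\dots,x_n)$ as an \emph{affine} self-map of $A$: $\phi(h)=Ph+c$ with $P=P_{w,\vec x}\in R$ lying in the augmentation ideal $I_R$ (its augmentation is $a=0$) and $c=w(e,\vec x)\in A$. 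Since already $w(y,\vec x)\in A$, the iterated identity for $w$ is exactly the statement that for every $\vec x$ and every $h\in A$ some power $\phi^m(h)=P^mh+(1+P+\dots+P^{m-1})c$ equals $e$.

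The technical heart is to show that each such $P$ acts nilpotently on $A$ with nilpotency index at most a constant $k=k(G)$ depending only on $G$. Put $\bar R=R/\operatorname{Ann}_R(A)$, again Noetherian; its nilradical is a finitely generated nil ideal, so $\operatorname{nil}(\bar R)^{k}=0$ for some $k=k(G)$, and it is enough to place $P$ in every minimal prime $\mathfrak q$ of $\bar R$. Suppose not. Since $A$ is faithful over $\bar R$ we have $A_{\mathfrak q}\neq 0$, and as $\mathfrak q$ is minimal, $(\bar R)_{\mathfrak q}$ is Artinian local and $P$ becomes a \emph{unit} $\pi$ there, so $\phi$ induces an affine bijection of $A_{\mathfrak q}$. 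The orbit hypothesis, applied to the tuples $(h,x_2,\dots,x_n)$ with $h\in A$, says that every orbit of this bijection through the image of $A$ in $A_{\mathfrak q}$ hits $e$. Feeding in $h=e$ and using $(\pi-1)(1+\pi+\dots+\pi^{\ell-1})=\pi^{\ell}-1$ one first gets $\pi^{\ell}=1$ for some $\ell$ (so $\pi$ is a root of unity), and then the explicit orbit formulas --- $\phi^m(y)=y^*+\pi^m(y-y^*)$ when $1-\pi$ is a unit, and $\phi^m(y)=y+mc$ when $\pi=1$ --- force the image of $A$ in $A_{\mathfrak q}$ to be finite (respectively trivial). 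That is impossible: this image is an $\bar R$-submodule generating the non-zero module $A_{\mathfrak q}$, whereas in characteristic $0$ the ring $(\bar R)_{\mathfrak q}$ is a $\Q$-algebra and $A_{\mathfrak q}$ a non-zero $\Q$-vector space, which has no non-zero finite subgroup; the positive-characteristic cases ($p\in\mathfrak q$) give the analogous contradiction, using that a finitely generated field of characteristic $p$ has only finitely many roots of unity. Hence $P\in\operatorname{nil}(\bar R)$ and $P^{k}A=0$.

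Finally, once $P^{k}A=0$, the endomorphism $I-P$ of $A$ is invertible with inverse $1+P+\dots+P^{k-1}$, so $\phi$ has the unique fixed point $y^*:=(1+P+\dots+P^{k-1})c\in A$, and $\phi^m(h)=y^*+P^m(h-y^*)=y^*$ for every $h\in A$ and every $m\ge k$. Applying the iterated identity to the tuple $(y^*,x_2,\dots,x_n)$: its first iterate is already $\phi(y^*)=y^*$, and every later one is again $y^*$, so $y^*=e$. Therefore $\phi^{k}(h)=e$ for every $h\in A$ and every $\vec x$; and since $w(y,\vec x)\in A$ for all $y\in G$, this yields $w_{\circ(k+1)}\equiv e$ in $G$, with $k+1$ independent of $w$. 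With the polycyclic case and the metabelian law as a non-trivial iterated identity, $G$ is bounded. The main obstacle is the nilpotency step of the third paragraph --- extracting nilpotency of $P$, and above all a bound on its index independent of $w$, from the merely pointwise hypothesis ``every orbit eventually reaches $e$'' --- which is precisely why the finiteness of the set of minimal primes of $R/\operatorname{Ann}_R(A)$ and the control of roots of unity in its residue fields are essential.
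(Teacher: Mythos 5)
Your approach is genuinely different from the paper's. You work with $A=[G,G]$ as a Noetherian module over $R=\Z[Q]$ and try to show that the ``multiplier'' $P=P_{w,\vec x}$ lies in the nilradical of $\bar R=R/\operatorname{Ann}_R(A)$, localizing at minimal primes; the paper instead analyzes the torsion-free and torsion parts of the abelian normal subgroup separately, bounding dimensions of $N\otimes\Q$ and $N\otimes\Z/p\Z$ via Lemmas~\ref{lemmafinitetensor}, \ref{lemmaalongcoordinates}, \ref{lemmacubeisfinite}, \ref{lemmadimfinitefinite}. Both rely on Hall's max-$n$ for finitely generated metabelian groups; yours is conceptually cleaner where it works. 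The first two paragraphs (exponent sums vanish, $\phi(h)=Ph+c$ with $P$ of zero augmentation, reduction to recurrence of an affine map on $A$) are sound, and the last paragraph (once $P^kA=0$, the fixed point $y^*$ must be $e$, giving depth $\le k+1$) is correct.

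There is, however, a genuine gap in the ``technical heart.'' You claim that $\bar P\notin\mathfrak q$ for a minimal prime $\mathfrak q$ yields a contradiction, and the case analysis you offer (``$1-\pi$ a unit'' versus ``$\pi=1$'') is incomplete: the alternative to $1-\pi$ being a unit is $\pi\equiv 1\bmod{\mathfrak q\bar R_{\mathfrak q}}$, not $\pi=1$. More seriously, the concluding contradiction fails whenever $\mathfrak q$ is simultaneously minimal and maximal with finite residue field, i.e.\ whenever $\operatorname{Spec}\bar R$ has an isolated closed point, equivalently $\bar R$ has a nontrivial finite Artinian local ring $\bar R_1$ as a direct factor. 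In that case $A_{\mathfrak q}=A_1$ is itself a finite $\bar R_1$-module, and the image of $A$ in $A_{\mathfrak q}$ being finite is no contradiction: the bijection $\phi_{\mathfrak q}$ can be a single cycle through $0$ on the finite set $A_1$, which is perfectly compatible with the recurrence hypothesis for that $\vec x$. Concretely, if $\bar R_1\cong\Z/p^r\Z$ and $\bar P$ maps to $1$ in it (possible with $\sum l_i=0$ since $Q$ can act nontrivially through $(\Z/p^r)^\times$), then $\bar P$ is a unit, not nilpotent, yet $h\mapsto h+c$ with $c$ a generator of $A_1$ is a single cycle. The invocation of finitely many roots of unity in a finitely generated field of characteristic $p$ does not close this, because in the problematic case the residue field is already finite. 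This situation really does arise: $\bar R_1$ is a direct factor exactly when $A$ has a nonzero finite $G$-invariant direct summand $A_1$, and such summands occur for plenty of finitely generated metabelian $G$ with $Q$ infinite. So the statement ``$P$ is nilpotent in $\bar R$'' is simply false in general. The fix is to peel off the finite direct summand $A_1$ (a finite normal subgroup of $G$) and pass to $G/A_1$ using Lemma~\ref{lemmafinitenormalsubgroup}, applying the nilpotency-via-minimal-primes argument only to the complementary factor, where every minimal prime has infinite residue field and your contradiction does go through; as written, the proof does not do this and the uniform bound $k(G)$ is not established.
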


In Section  \ref{sectionfurthernotions}  we define another notion of iterated identities, which we call solvability type (or $S$ type) iterated identities and we discuss the relation between iterated
quasi-varieties and elementary classes generated by varieties of groups. In Example \ref{grigorchuknotelementary} we consider first Grigorchuk group and show that a group satisfying an iterated identity $w(x_1)=x_1^2$ does not necessarily belong to the elementary class of groups satisfying bounded iteration of this identity.

 In Subsection \ref{openquestions} we list some open problems.

\subsection{Aknowledgements}. The author is grateful to Rostislav Grigorchuk for his remarks on the preliminary version of this paper.

\section{basic facts and examples} \label{sectionbasic}

\begin{remark} Suppose that $w(x_1, \dots, x_n)$ is freely equivalent to $w'(x_1, \dots, x_n)$. Then a group $G$ satisfies an $E$ type iterated identity $w$ if and only if it satisfies an iterated identity $w'$.
\end{remark}

\begin{proof}
It is clear that $w_{\circ d}(x_1, x_2, \dots, x_n) = w'_{\circ d}(x_1, x_2, \dots, x_n)$
 for any $G$, any $x_1, \dots, x_n \in G$ and any  $d\ge 1$.
\end{proof}

Given a group $G$, we can therefore speak about subset $\Omega_n \subset F_n$ of  $E$ type iterated identities of $G$ on $n$ variables. We call this subset 
$\Omega_n$ {\it characteristic subset} of $G$.

The following is straightforward

\begin{lemma}
\begin{enumerate}

\item If $G$ satisfies an $E$-type iterated identity $w$, then
any subgroup of $G$ has the same property.

\item If $G$ satisfies an $E$-type iterated identity $w$, then
any quotient of $G$ has the same property.

\item  If $G_1 \subset G_2 \subset G_3 \dots$, and each $G_i$ satisfies an $E$-type identity $w$, then the union
$G= \cup_{i=1}^\infty G_i$ satisfies the same property.

\end{enumerate}
\end{lemma}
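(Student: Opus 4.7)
The plan is to verify each of the three parts directly from Definition \ref{definitionEngel}, exploiting the fact that the iterated word $w_{\circ N}$ is built using only the group multiplication and inversion, so it is preserved by homomorphisms and by inclusion of subgroups. Each part amounts to a one-line observation once the right setup is written down; there is no real obstacle, but care must be taken because the exponent $N$ may depend on the particular tuple $(x_1, \dots, x_n)$.

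For (1), given $H \subseteq G$ and $x_1, \dots, x_n \in H$, I would simply note that $x_1, \dots, x_n$ is also an $n$-tuple in $G$, so by the hypothesis there exists $N$ with $w_{\circ N}(x_1, \dots, x_n) = e$ in $G$; since $H$ is closed under the group operations, the same equality holds in $H$ with the same $N$.

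For (2), let $\pi \colon G \twoheadrightarrow G/K$ be the quotient map and let $\bar x_1, \dots, \bar x_n \in G/K$. I would pick any lifts $x_i \in G$ with $\pi(x_i) = \bar x_i$, apply the hypothesis in $G$ to obtain $N$ with $w_{\circ N}(x_1, \dots, x_n) = e$, and then apply $\pi$ to both sides; since $\pi$ is a homomorphism, $\pi(w_{\circ N}(x_1, \dots, x_n)) = w_{\circ N}(\bar x_1, \dots, \bar x_n)$, which equals $\bar e$ in $G/K$.

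For (3), given $x_1, \dots, x_n \in G = \bigcup_{i} G_i$, I would use the fact that the chain is ascending to find a single index $i_0$ with all $x_j \in G_{i_0}$; then by the hypothesis applied inside $G_{i_0}$, there exists $N$ such that $w_{\circ N}(x_1, \dots, x_n) = e$ in $G_{i_0}$, and this equality persists in the larger group $G$. The only mildly subtle point across the three parts is that the value of $N$ need not be uniform in the tuple (it is $N$ that may vary with $(x_1, \dots, x_n)$), but Definition \ref{definitionEngel} is stated pointwise in the tuple, so this causes no difficulty.
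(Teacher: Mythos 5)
Your proof is correct and is exactly the straightforward argument the paper has in mind (the paper states the lemma with no written proof, calling it ``straightforward''). All three parts are handled properly, including the key point that $N$ may depend on the tuple and that in part (3) one needs the chain to be ascending to place a finite tuple in a single $G_{i_0}$.
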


 \begin{example}
 Let $N$ be a nilpotent group,
   $G= N \wr \Z/p \Z$, $w(x_1, x_2)= (x_1x_2 x_1^{-1}x_2^{-1})^p$. Then $G$ satisfies an Engel type identity $w$.
   
 \end{example}
 
 \begin{proof}
 First observe that $w$ is an iterated identity of $N$. Indeed, $k$-th iteration of $w$ belongs to the $k$-th group in a central series of $N$.
 Observe also  that for any $x_2 \in G$ and any $x_1 \in \Z/p\Z ^\infty \subset  N \wr \Z/p \Z$, it holds $(x_1x_2 x_1^{-1}x_2^{-1}) \in (\Z/p \Z)^\infty$, and thus
 $w(x_1, x_2) =e$.
 \end{proof}

 \begin{example}  \label{examplefreesubgroup}
 i) Let  the word $w(x_1, x_2, \dots, x_n)$ be not freely equivalent to the empty word. Then for any $k\ge 1$ its iteration
 $w_{\circ k}(x_1, x_2, \dots, x_n)$ is not freely equivalent to the empty word.
 
 ii)  Let $G$ be a group that contains a non-Abelian free subgroup. Then $G$ does not satisfy any non-trivial $E$ type iterated identity: for any $w(x_1, x_2, \dots, x_n)$ which is not freely equivalent to the empty word, $G$ does not satisfy $w$.
 \end{example}
 
 \begin{proof} To prove i) and ii), 
 it is sufficient to show that the free group $F_2$ does not satisfy  the iterated identity $w$, if $w(x_1, x_2, \dots, x_n)$  is not freely equivalent to the identity word.
 First observe that $w$ is not freely equivalent to some word in $x_2, \dots, x_n$. 
 
 Indeed, if $G$ is any group satisfying $E$ type iterated identity $w(x_1, x_2, \dots, x_n)$ such that
 $w(x_1, x_2, \dots, x_n) = w'(x_2, \dots, x_n)$, then all iteration of $w$ are equal to $w'(x_2, \dots, x_n)$. Hence,  If $w$ is an $E$ type iterated identity of $G$, then $w(x_1, x_2, \dots, x_n) = w'(x_2, \dots, x_n)$ is identity of $G$. If $G$ is a free group, this implies that $w$ is freely equivalent to trivial word.
 
 Now suppose that $w$ is not freely equivalent to some word in $x_2, \dots, x_n$.  Since $F_2$ contains $F_n$ as a subgroup, it suffices to show that $w$ is not an iterated identity of $F_n$. Let $x_1, \dots, x_n$ be free generators of $F_n$. Observe that $w(x_1, \dots, x_n)$ does not belong to the subgroup generated by $x_2, \dots, x_n$. Therefore,
 $w(x_1, \dots, x_n)$, $x_2, \dots, x_n$ generate a free group of rank $n$. Applying the same argument again, we conclude that
 any iteration of $w(x_1, \dots, x_n)$ together with $x_2, \dots, x_n$ generate a free group of rank $n$. In particular, no such iteration is trivial, and hence $w$ is not an iterated identity of $G$.
 
 \end{proof}

\subsection{Known results about finite solvable groups} \label{subsolvable}

Below we use the following notation: 
$w^y= y^{-1}w y $, $w^{-y}=y^{-1} w^{-1} y$, $[u,w]=u^{-1} u^w = u^{-1} w^{-1} u w$,  $[w_1, w_2, \dots w_k] =[w_1, w_2], w_3 \dots] $ .

  One of the examples constructed by Brandl and Wilson show in \cite{brandlwilson} 
is a sequence of words in $4$ letters $X, Y, Z, T$: $s_1(X,Y, Z,T)=Y$, $s_{k+1}$ is defined recursively by $s_{k+1}(X,Y,Z,T) = [X, s_k(X,Y,Z,T), s_k(X,Y,Z,T), Z]^T$.
A sequence of words $s_n$ in variables $x,y$ is defined  by Bray, Wilson and Wilson  in \cite{braywilsonwilson} by the rules $s_0=x$ and $s_{n+1}=[s_n^{-y},s_n]$ .  Bandman,Greuel, Grunewald, Kunyavski{\u\i}, Pfister,  and Plotkin study in  (\cite{bandmanandco}) the sequence
$s_1=x^{-2}y^{-1}x$ and $s_{n+1}= [x s_n^{-1} x^{-1}, y s_n^{-1} y^{-1}]$
(in \cite{bandmanandco}  the notation used for the commutator $[u,w]   =  u w u^{-1} w^{-1}$, different from the one we use and from that of the \cite{brandlwilson, braywilsonwilson, ribnere }).
Ribnere constructs in \cite{ribnere} various  sequences of the form $s_{k+1}=[s_k^f, s_k^g]$.

In all these examples it is clear that for any solvable group of solvability length at most $k$ we have $s_k=e$. The results of above mentioned papers is  that
for finite groups the converse statement holds: any finite group such that $s_k=e$ for any choice of variables and any sufficiently large $k$ is solvable. (The idea of the proofs is that
if there would exist a non-solvable finite group $G$ with this property, then we could chose $G$ with this property among minimal simple finite groups. The authors use 
then the characterization of minimal finite groups due to Thompson (\cite{thompson1, thompson456} to find a simple group which violate this property and to get a contradiction).

\begin{remark}  \label{remarkgolod}
In all the examples above, it is clear that there exist (infinite)  finitely generated non-solvable groups $G$ such that $s_k=e$ for all sufficiently large $k$. Indeed, if $G$ is such that
any $4$ generated group of $G$ is solvable (e.g.  $G$ is one of Golod groups constructed in \cite{golod}), then it is clear that $G$ has this property.

\end{remark}

\begin{remark} \label{reformulationsolvable}
Put
$$
w_{\rm BW}(x_1, x_2, x_3, x_4)=   [x_2, x_1, x_1, x_3]^{x_4} ;
$$
$$
w_{\rm BWW}(x_1,x_2)=  [x_1^{-x_2},x_1]     = [x_2^{-1} x_1^{-1}x_2, x_1 ] = x_2^{-1} x_1 x_2 x_1^{-1} x_2^{-1} x_1^{-1} x_2 x_1;
$$
 $$
 w_{\rm BGGKPP}(x_1, x_2,x_3) = [x_2 x_1^{-1} x_2^{-1}, x_3 x_1^{-1} x_3^{-1}] = x_2 x_1 x_2^{-1}      x_3 x_1 x_3^{-1}     x_2 x_1^{-1} x_2^{-1}  x_3 x_1^{-1} x_3^{-1}    
 $$
 
The results above imply in our terminology that a finite group satisfies an $E$ type iterated identity $w_{\rm BW}$,  (or $w_{\rm BWW}$,  or $w_{\rm BGGKPP}$ etc) if
and only if it is solvable; and it is clear that any solvable group, finite or infinite, satisfy these identities.
Observe however that a class of (infinite) groups satisfying the iterated identity  $w_{\rm BGGKPP}$ might  be smaller than the class of groups, satisfying the corresponding
correct sequence from \cite{bandmanandco}, cited 
above.

\end{remark}

 \subsection{Symmetric groups}
 
 The following example shows that a product of $E$ type iterated identities is not necessarily an $E$ type iterated identity.
 
 \begin{example} \label{examplenotproduct}
 
 Consider a finite symmetric group $S_n$. Let $m$ be the product of prime numbers that are not greater than $n$. Suppose that $n \ge 6$.
 Put $w_1(x_1, x_2, x_3, x_4) = x_2 x_1^m x_2^{-1}$, $w_2(x_1, x_2, x_3, x_4) = x_3 x_1^m x_3^{-1}$, $w_3(x_1, x_2, x_3, x_4) = x_4 x_1^m x_4^{-1}$.
 Then $w_1, w_2, w_3$ are iterated identities of $S_n$, while their product $w = w_1 w_2 w_3$
 is not an iterated identity of $S_n$.
 
 \end{example}
 
 \begin{proof}
 Observe that for any $g\in S_n$ there exists $k\ge 1$ such that $g^{m^k}=e$.
 Observe that $w_{1, \circ d} (x_1, x_2, x_3, x_4) = x_2^d x_1^{m^d} x_2^{-d}$, and hence $w_1$ (and, analogously, $w_2$ and $w_3$) is an iterated identity of $S_n$.
 
 Now we prove  that $w$ is not at iterated identity of $S_n$.  The elements of the set of cardinality $n$ we number by $1$, $2$, \dots, $n$.
  Consider $x_1 \in  S_n$ which is a product of a cycle of length four: $1 \to 2 \to 3 \to 4 \to 1$ and of length two: $5 \leftrightarrow 6$. 
  Note that  $m$ is an even number which is not divided by four, and hence $g=x_1^m$ is a product of two cycles of length two: $1 \to 3 \to 1$ and $2 \to 4 \to 1$.
 Observe that $x_1$ is an even permutation. Let us show  that there exist $x_2, x_3, x_4 \in S_n$ such that
 $x_1 = x_2 g x_2^{-1}  x_3 g x_3^-1   x_4 g x_4^{-1}$. Indeed,
 $x_1$ is a product of the following cycles of length two: $5 \leftrightarrow 6$, 
 $1 \leftrightarrow 2$,
  $1 \leftrightarrow 3$ ,
   $1 \leftrightarrow 4$ (here and in the sequel we multiply the cycles from left to right). Therefore, It is also a product of the following cycles of
 length two
 $$
 x_1 =  \left( (5 \leftrightarrow 6), (1 \leftrightarrow 2) \right)    \left(   (1 \leftrightarrow 3), (5 \leftrightarrow  6)  \right)   \left(  (5 \leftrightarrow 6)  (1 \leftrightarrow 4) \right).
 $$
 Observe that a product of two cycles on any two pairs of four disjoint elements is conjugate to $g$ in $S_n$.
 We have therefore shown that there exists $x_2, x_3, x_4 \in S_n$ such that
 $$
 x_1 = x_2 x_1^m x_2^{-1} x_3  x_1^m x_3^{-1} x_2 x_1^m x_2^{-1}. 
 $$
 In other words, $x_1 = w(x_1, x_2, x_3, x_4)$, and hence $w_{\circ d} (x_1, x_2, x_3, x_4) = x_1 \ne e$ for any $d \ge 1$.
 This shows that $w$ is not an iterated identity of $S_n$.

 \end{proof}

\section{Bounded groups, fractal groups, iterational depth of a group} \label{sectionbounded}

\begin{definition} \label{defidbounded}
Given an Engel type iterated identity $w$ of a group $G$ let us say that $G$ has {\it  iterational depth} at most $D$ with respect to $w$ if for any $x_1, \dots, x_n \in G $ there exists
$d\le D$ such that  $d$-th Engel type iteration $w_{\circ d}(x_1, \dots, x_n)=e$ in $G$. The infimum of such $D$ we call {\it iterational depth} of $w$ in $G$ and denote
by $s(w,G)$ (or $s(w)$ for short).
 If iterational depth of $w$ in $G$ is finite, we call such iterated identity {\it bounded}.

We also say that $G$ has {\it iterational depth} at most $d$ if the  depth  of $G$ with respect to any Engel type iterated identity that it satisfies,  is at most $d$.
The supremum of such $d$  over all  non-trivial (that is, not equivalent to the empty word in a free group) Engel type iterated identities $w$ we denote by $s(G)$ and call it {\it iterational depth} of $G$. If $G$ does not satisfy any non-trivial iterated identity, we put $s(G)=-\infty$.

\end{definition}

\begin{definition} \label{defbounded}
 We say that $G$ is bounded if $s(G)$ is finite (that is,  $1\le s(G)<\infty$). If $s(G)=+\infty$ we say that $G$ is fractal with respect to identities.
\end{definition}

\begin{remark} \label{boundedandidentities}
i)  Any bounded group satisfies a non-trivial identity.

ii) If $G$ is such that $s(G)<\infty$ then $G$ is bounded if and only if $G$ satisfies a non-trivial identity.
\end{remark}

\begin{proof}

i) By definition, since $G$ is bounded, there exists a non-trivial iterated identity $w$ satisfied by $G$. Since $G$ is bounded, it implies that there exists $D$ such that
for any $x_1,  x_2, \dots, x_n$  it holds  $w'(x_1, \dots, x_n)=e$ for at least one $w'$ among $w$, $w_{\circ 2}$, \dots, $w_{\circ D}$.
If $w$ is not freely equivalent to the empty word, then none among $w$, $w_{\circ 2}$, \dots, $w_{\circ D}$ is freely equivalent to a free word (see i) of Example \ref{examplefreesubgroup}).
If any tuple of elements of a group satisfy at least one among a finite list of identities, then the group satisifies a non-trivial identity.

ii) If $G$ satisfies a non-trivial identity $w$, then $G$ satisfies the (non-trivial) iterated identity $w$. Therefore, by definition of iterational depth $s(G)\ne - \infty$.
Therefore, if $s(G)<\infty$, then $s(G)$ is a finite number, and thus $G$ is bounded.

If $G$ is bounded, then by the first part of the remark we know that $G$ satisfies a non-trivial identity. Therefore, $s(G) \ne \infty$, and thus $G$ is bounded so far
as $s(G) \ne - \infty$.

\end{proof}

\begin{example}  \label{examplefinite}

Any finite group is bounded. 

\end{example}

\begin{proof}
Since $G$ satisfies the $E$ type iterated identity $w$, we know that for any $x_1, \dots, x_l$ in $G$ there exists
$n$ such that $w_n(x_1, \dots, x_l)=e$.  Put $s = \max n$, where maximum is taken over all possible choices of $x_1, \dots, x_l$ in $G$.

\end{proof}

\begin{example} \label{exampleabelian}
$s(\Z^d)=1$. Similarly, if $G$ is an Abelian group which admits an element of infinite order, then $s(G)=1$. 
\end{example}

\begin{proof}
Observe that if $w(x_1, \dots, x_n)$ is a $E$ type iterated identity of $G$, then for any $i$ the total number of $x_i$ in $w$ is equal to zero.
(Indeed, otherwise we  put $x_j=e$ for any $j\ne i$ and let $x_i$ be an element of infinite order in $G$, we observe that
$w_{\circ d}(x_1, x_2, \dots x_n) \ne e$ for any $d\ge 1$ and
 we get a contradiction).
Since $G$ is Abelian, we observe that for any $x_1, \dots, x_n \in G$ it holds $w(x_1, \dots, x_n)=0$.

\end{proof}

\begin{remark}
If $G$ is a finitely generated Abelian group, then $s(G)$ is finite (indeed, in this case it is finite in case if $G$ is finite by Example \ref{examplefinite} and is equal to one
in case if $G$ is infinite by Example \ref{exampleabelian}).

It is clear that  an infinitely generated Abelian group can have an infinite identity depth, as for example an infinite direct some $\sum_{i\ge 1} \Z/ p^i \Z $ which satisfies an iterated identity $w(x_1)=x_1^p$ and does not satisfy any identity of the form $w'(x_1)=x_1^{p^k}$.
\end{remark}

\section{Fractal examples among elementary amenable groups} \label{sectionfractalexamples}

\begin{example} \label{exampleelementaryamenable}
i)  Let $G$ be an extension of  group of upper uni-triangular infinite square matricies   with integer coefficients by a cyclic group (that acts on $\Z \times \Z$ by
$z_1, z_2 \to z_1+1, z_2+1$). Consider $\bar{w}(x_1, x_2, x_3) = [x_1, [x_2, x_3]]]$.
Then $G$ satisfies the iterated identity $\bar{w}$. For any iterated identity $w$ of $G$ it holds
$s(w, G) =\infty$. In particular, $G$ is fractal with respect to identities.

ii) Moreover, there exists a continuum of non-isomorphic finitely generated elementary amenable  groups that are fractal with respect to identities, and which do not satisfy any identity.

\end{example}

\begin{proof}
i) Denote by $M_\infty$  the group of infinite upper uni-triangular matrices  with integer coefficients (which we denote by $m_{i,j}$, $i, j\in Z$). This group is generated by  upper triangular matrices $m_i$, $i\in \Z$  such that $m_i (i, i+1)=1$
and $m_i(j,k)=0$ if $j\ne k$ and $(j,k)\ne (i,i+1)$.
We denote by $\phi$ the  group homomorphism that sends $m_i$ to $m_{i+1}$. The group $G$  is by definition the extension of $M_\infty$ by $\phi$.
It is clear that $G$ is a finitely generated group, indeed, it is generated by $\phi$ and $m_0$. 
$G$ is elementary amenable, since it is an extension of a locally nilpotent group $M_\infty$ by a cyclic group.

\begin{lemma} \label{lemmaiteridw}
$G$ satisfies the iterated identity $\bar{w}=\bar{w}(x_1, x_2, x_3) = [x_1, [x_2, x_3]]]$.
\end{lemma}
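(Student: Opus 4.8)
The plan is to reduce the statement to the local nilpotence of the normal subgroup $M_\infty$. Fix $x_1,x_2,x_3\in G$ and set $y=[x_2,x_3]$. Since $G/M_\infty$ is cyclic, hence abelian, the commutator subgroup $[G,G]$ lies in $M_\infty$; in particular $y\in M_\infty$. A one-line induction on $k$, using that the iteration in Definition~\ref{definitionEngel} is taken on the first variable, gives
$$
\bar{w}_{\circ k}(x_1,x_2,x_3)=[x_1,\underbrace{y,y,\dots,y}_{k}],
$$
the left-normed commutator in the notation of Subsection~\ref{subsolvable}.

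Next I would move inside $M_\infty$. Because $M_\infty$ is normal in $G$ and $y\in M_\infty$, the element $z:=[x_1,y]=\bar{w}_{\circ 1}(x_1,x_2,x_3)=(x_1^{-1}y^{-1}x_1)\,y$ again belongs to $M_\infty$. Then I consider the two-generated subgroup $H:=\langle z,y\rangle\subseteq M_\infty$. The group $M_\infty$ is locally nilpotent: it is the directed union of the finitely generated subgroups generated by the $m_i$ with $|i|$ bounded, each of which embeds into a finite upper uni-triangular matrix group $UT_m(\Z)$ and is therefore nilpotent. Hence $H$ is nilpotent, of some class $c=c(x_1,x_2,x_3)$, and for every $k\ge c+1$,
$$
\bar{w}_{\circ k}(x_1,x_2,x_3)=[\,z,\underbrace{y,\dots,y}_{k-1}]\in\gamma_{k}(H)\subseteq\gamma_{c+1}(H)=\{e\},
$$
where $\gamma_i$ is the lower central series. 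Thus for every triple $(x_1,x_2,x_3)$ some iterate of $\bar{w}$ is trivial, which is exactly the claim.

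The only real input is the local nilpotence of $M_\infty$, which is immediate from its presentation as a directed union of finite unipotent groups; everything else is routine commutator bookkeeping. The point that I expect to require the most care — and where a naive argument breaks — is the reduction to $z=[x_1,y]$: one cannot argue that $\langle x_1,y\rangle$ is nilpotent, since $x_1$ need not lie in $M_\infty$ and in fact $\langle x_1,y\rangle$ can be all of $G$ (e.g. $x_1=\phi$, $y$ a commutator involving $m_0$). The right observation is that the \emph{first} iterate already falls into the normal locally nilpotent subgroup $M_\infty$, after which the number of remaining iterations needed is governed purely by the nilpotency class of a $2$-generated subgroup of $M_\infty$, not by $x_1$ or the cyclic quotient.
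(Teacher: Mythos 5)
Your proof is correct and follows essentially the same route as the paper: both observe that the first iterate $\bar w(x_1,x_2,x_3)$ together with $[x_2,x_3]$ already lies in the normal locally nilpotent subgroup $M_\infty$, and then conclude from nilpotency of the resulting two-generated subgroup that the left-normed Engel iterates eventually vanish. Your extra remarks (spelling out local nilpotence via the directed union of finite unipotent groups, and noting why one must pass from $x_1$ to $z=[x_1,y]$ before invoking nilpotency) are correct elaborations of points the paper states more tersely, but the underlying argument is the same.
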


\begin{proof}
First observe that for any $x_2, x_3 \in G$ it holds $[x_2, x_3] \in [G,G] \subset M_\infty$  and for any $x_1 \in G$ it holds
$\bar{w}(x_1, x_2, x_3) \in [G,G] \subset M_\infty$.
Take some $x_1, x_2, x_3$ and consider $x=[x_2, x_3]$ and $y=\bar{w}(x_1, x_2, x_3)$. Since $x, y \in M_\infty$, they generate a subgroup which is nilpotent, and hence Engel.
This shows that there exists $d\ge 1$ such that $d$-th commutator of the form $ [\dots  [[x,y], y] \dots , y]$ is equal to $e$. This implies that
$\bar{w}_{\circ (d+1)} (x_1, x_2, x_3)=e$.

\end{proof}

Now we want to show that for any iterated identity $w$ of $G$ it holds $s(w, G) = \infty$.

\begin{lemma} \label{lemmaMwithoutidentities}
For any (non-iterated)  non-trivial identity $w$ there exists $n$ such that  the group 
$M^{(n)}$ of $n \times n$ upper uni-triangular matrices with integer coefficients does not satisfy $w$.
\end{lemma}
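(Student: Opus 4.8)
The plan is to exploit the fact that the groups $M^{(n)}$ of $n\times n$ upper uni-triangular integer matrices are nilpotent of class exactly $n-1$, and that their union $M_\infty$ contains, in a suitable sense, all finitely generated nilpotent groups of arbitrary class, so it cannot satisfy any non-trivial identity. First I would recall the standard fact that $M_\infty$ contains a non-abelian free nilpotent group of every finite class: for each $c$, choosing $c$ of the generators $m_{i_1},\dots,m_{i_c}$ with consecutive indices produces a copy of a relatively free nilpotent group whose class grows with $c$. More directly, I would argue by contradiction: suppose $w=w(x_1,\dots,x_k)$ is a non-trivial word such that $M^{(n)}$ satisfies $w$ for every $n$. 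Then the locally nilpotent group $M_\infty=\bigcup_n M^{(n)}$ also satisfies $w$, since any finite tuple of elements of $M_\infty$ lies in some $M^{(n)}$.

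The core step is then to show that $M_\infty$ satisfies no non-trivial identity. The cleanest route is to observe that for any nilpotency class $c$, the free nilpotent group $N_{c}$ of class $c$ on two generators embeds into $M^{(c+1)}$ (indeed into its unipotent completion, realized concretely via the faithful action on the truncated polynomial ring, i.e.\ the standard embedding of a f.g.\ torsion-free nilpotent group into upper uni-triangular integer matrices — Jennings/Mal'cev). A non-trivial word $w$ of length $\ell$ involves only finitely many commutator "layers"; concretely $w$ is not a law in the free nilpotent group of class $\ell$ (since $w$ is non-trivial in the free group and the quotient map to the free nilpotent group of sufficiently large class is injective on the ball of radius $\ell$, or simply because $w$ survives in $F/\gamma_{\ell+1}(F)$ when $w\notin\gamma_{\ell+1}(F)$, and one handles $w\in\gamma_{\ell+1}(F)$ by passing to a higher class where $w$ is still non-trivial). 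Hence there is some $c$ and a substitution in $N_c\hookrightarrow M^{(c+1)}$ witnessing $w\neq e$, contradicting that $M^{(c+1)}$ satisfies $w$. This yields the $n$ we want: any $n$ large enough that $M^{(n)}$ contains the relevant free nilpotent subgroup works.

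The main obstacle is the bookkeeping in the previous paragraph: making precise \emph{which} class $c$ suffices for a given non-trivial $w$, and exhibiting the embedding of the free nilpotent group (or at least enough of it) into $M^{(n)}$ explicitly rather than invoking Mal'cev completion. I expect the slickest phrasing avoids free nilpotent groups altogether: since $\bigcap_c \gamma_c(F_k) = \{e\}$ in a free group, a non-trivial $w\in F_k$ lies outside $\gamma_{c+1}(F_k)$ for some $c$; then $F_k/\gamma_{c+1}(F_k)$ is a finitely generated torsion-free nilpotent group in which $\bar w\neq e$, and every such group embeds into $M^{(n)}$ for $n$ large by the classical faithful uni-triangular representation, so $M^{(n)}$ does not satisfy $w$. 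I would present it in this order: (1) reduce to showing $M_\infty$ has no identity via local nilpotence and the increasing union; (2) given non-trivial $w$, find $c$ with $w\notin\gamma_{c+1}(F_k)$; (3) invoke the standard embedding of $F_k/\gamma_{c+1}(F_k)$ into uni-triangular integer matrices of some size $n$; (4) conclude $M^{(n)}$ fails $w$, which is exactly the statement.
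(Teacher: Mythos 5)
Your final paragraph reproduces the paper's argument essentially verbatim: reduce to finding a finitely generated torsion-free nilpotent group on which $w$ fails, obtain it from a suitable lower-central quotient of a free group (i.e.\ residual torsion-free nilpotence of free groups, which the paper cites to Magnus), and embed it into some $M^{(n)}$ via Jennings. The only cosmetic difference is that you spell out the residual nilpotence via $\bigcap_c \gamma_c(F_k)=\{e\}$ and the torsion-freeness of $F_k/\gamma_{c+1}(F_k)$ rather than quoting the result directly; the preliminary detour through $M_\infty$ and free nilpotent subgroups is unnecessary but harmless.
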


\begin{proof}
Any torsion-free finitely generated nilpotent group can be imbedded as as subgroup in  $M^{(n)}$ for some $ n\ge 1$ (by a result of Jennings, see
\cite{hallnilpotentgroups}).
Let $w$ be a word which is not freely equivalent to the empty word.
It is sufficient to check that there exists a torsion-free finitely generated   nilpotent group which does not satisfy $w$.
The latter follows from the fact that the free group $F_2$ does not satisfy $w$, and that free groups are residually torsion-free nilpotent (\cite{magnus1935}).

\end{proof}

Lemma \ref{lemmaMwithoutidentities} and Remark \ref{boundedandidentities} imply that for any iterated identity $w$ of $G$ it holds
 $s(w,G)=\infty$.    Since $\bar{w}$ is a non-trivial iterated identity of $G$ (by Lemma \ref{lemmaiteridw}), this implies, in particular, that $G$ is fractal with respect to identities.

ii) Let $G$ be the group constructed in the proof of i). Let $G_1$ be any finitely generated solvable group of step $3$ such that the quotient over the the center
$G/C(G)= \Z \wr \Z$.
Consider the direct product $G'=G_1$. Observe that $G_1$, and hence $G'$ satisfies the iterated identity $w$ from the proof of i).
Since $G$ is a quotient of $G'$, this implies that $s(w, G') \ge s(w, G) \ge \infty$, and hence $s(G')= \infty$, in other words $G'$ is fractal with respect to identities.
Observe also that $C(G)$ is trivial, and hence $C(G+G_1) = C(G)$.
By \cite{hall1954}, Section 3, we know that any countable Abelian group is a center of some group $G_1$ as above. Therefore, any countable Abelian $A$ group there exists
$G'$, as above, with the center equal to $A$. In particular, there exists a continuum of such groups $G_1$,

Finally observe, that any $G_1$ as above does not satisfy a non-trivial identity, since $G_1$ contains $G$ as a subgroup and since $G$ does not satisfy any non-trivial identity.

\end{proof}

The following example shows that , in contrast with usual identities, a conjugate of an iterated identity is not necessarily an iterated identity.

\begin{example} \label{conjugatenotidentity}

 Let $G$ and $\bar{w}$ be as in Example \ref{exampleelementaryamenable}.
Consider $w'(x_1, x_2, x_3, x_4)= x_4 \bar{w}(x_1, x_2, x_3) x_4^{-1}$. Then $G$ does not satisfy $w'$. 

\end{example}

\begin{proof}

Let $e_{i,j} (\lambda)$, $j>i$ denotes the elementary upper-diagonal unitary matrix $m$, such that $m_{i,j}= \lambda$ and all the other entries of $m$ over diagonal
are zero. Denote by $e_{i,j} = e_{i,j} (1)$.

\begin{lemma} Let $\alpha$ be an automorphism the group of infinite upper uni-triangular matrices $M_\infty$ such that  $\alpha(M)_{i,j} = M_{i+1, j+1}$ 
Given $y \in M_\infty$ consider the map $m_y: M_{\infty} \to M_{\infty}$ which sends $x$ to $ \alpha(x y x^{-1} y^{-1})$.

If  $y=e_{-1,0} e_{0,1}(-1) e_{-1, 1}(-1)$ and
$x=e_{-1,0}$, then $m_{y, \circ d} (x)\ne e$ for any $d \ge 1$. 
Moreover,  $m_{y, \circ d} (x)\ne e$  is an elementary matrix with exactly one non-zero value off the main diagonal, at $(-d, 0)$. 

\end{lemma}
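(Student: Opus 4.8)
The plan is to compute the iterates $m_{y,\circ d}(x)$ explicitly and show by induction on $d$ that each one is the elementary matrix $e_{-d,0}$ (up to sign), which in particular is never the identity. First I would set up notation carefully: since $M_\infty$ is uni-triangular, every element can be written as $I$ plus a strictly-upper-triangular integer matrix, and the key relation is that for elementary matrices $e_{i,j}(\lambda)e_{k,l}(\mu) = e_{k,l}(\mu)e_{i,j}(\lambda)$ whenever $\{i,j\}\cap\{k,l\}=\emptyset$, while $[e_{i,j}(\lambda),e_{j,k}(\mu)] = e_{i,k}(\lambda\mu)$ (with appropriate sign conventions), and $\alpha$ shifts all indices up by one: $\alpha(e_{i,j}(\lambda)) = e_{i+1,j+1}(\lambda)$. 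With $y = e_{-1,0}\,e_{0,1}(-1)\,e_{-1,1}(-1)$ and $x = e_{-1,0}$, the base case $d=1$ is a direct commutator computation: $xyx^{-1}y^{-1}$ should come out to a single elementary matrix supported at $(-1,1)$ — I would verify that the $e_{-1,0}$ factors cancel, the $e_{0,1}(-1)$ factor contributes a commutator with $x=e_{-1,0}$ giving something at position $(-1,1)$, and the $e_{-1,1}(-1)$ factor commutes past — and then applying $\alpha$ shifts this to position $(0,2)$...

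Here I realize the statement says $m_{y,\circ d}(x)$ is supported at $(-d,0)$, so I would instead track things so that after the shift the support lands at $(-d,0)$; the cleanest route is to prove the stronger inductive claim that $m_{y,\circ d}(x) = e_{-d,0}(c_d)$ for an explicit nonzero integer $c_d$ (I expect $c_d = \pm 1$, the signs being exactly why the specific scalars $-1$ were inserted into $y$), and then show $m_{y,\circ(d+1)}(x) = \alpha\big( e_{-d,0}(c_d)\,y\,e_{-d,0}(c_d)^{-1}\,y^{-1}\big)$. Expanding the inner expression, the factor $e_{-1,0}$ of $y$ commutes with $e_{-d,0}(c_d)$ for $d\ge 2$ and hence cancels with $y^{-1}$; the factor $e_{0,1}(-1)$ of $y$ is disjoint from $(-d,0)$ for $d\ge 1$ and cancels; the factor $e_{-1,1}(-1)$, for $d\ge 2$, is disjoint from $(-d,0)$ and cancels — so for $d\ge 2$ the inner expression is trivial, which would contradict the claim. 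So the correct bookkeeping must be subtler: the commutator $e_{-d,0}(c_d)\,e_{-1,0}\,e_{-d,0}(-c_d)\,e_{-1,0}^{-1}$ is not trivial because $(-d,0)$ and $(-1,0)$ share the column index $0$ — wait, sharing a column gives commuting elementary matrices, so it IS trivial. The resolution must be that it is $e_{0,1}(-1)$, sharing the row index $0$ with nothing but sharing with $e_{-d,0}$ at the junction $0$: $[e_{-d,0}(c_d), e_{0,1}(-1)] = e_{-d,1}(-c_d)$.

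So the genuine mechanism is: at each stage the relevant factor of $y$ that produces a new off-diagonal entry is $e_{0,1}(-1)$, the commutator $[e_{-d,0}(c_d),e_{0,1}(-1)]$ lands at position $(-d,1)$, the other factors of $y$ ($e_{-1,0}$ and $e_{-1,1}(-1)$) and any correction terms must be checked to contribute only at positions that do not interfere — or to produce exactly the cancellations that leave a single entry — and finally $\alpha$ shifts $(-d,1)$ to $(-d+1,2)$, which is not $(-d-1,0)$. The sign/index accounting clearly needs to be done with care, and reconciling it with the claimed support location $(-d,0)$ is the crux. I would therefore expect the actual inductive hypothesis to be that $m_{y,\circ d}(x)$ lies in the subgroup generated by $\{e_{-d,0},\dots\}$ with a distinguished nonzero coefficient at $(-d,0)$, possibly with further entries above and to the right that get killed by the specific choice of the $-1$ coefficients in $y$; the hard part will be verifying that these auxiliary entries cancel exactly, so that each iterate is a single elementary matrix rather than merely nontrivial — though for the weaker conclusion $m_{y,\circ d}(x)\ne e$ it suffices to track the $(-d,0)$-entry alone and show it stays nonzero, which is the step I would present in full detail, leaving the "exactly one nonzero entry" refinement to a routine induction on the same lines.
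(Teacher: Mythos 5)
You have identified the right \emph{mechanism}---the commutator of $e_{-(d+1),0}(\lambda)$ with the factor $e_{0,1}(-1)$ of $y$ at the shared index $0$ is what produces the new entry---but your proposal as written does not close, and the reason is a sign error in the action of $\alpha$. The paper defines $\alpha(M)_{i,j} = M_{i+1,j+1}$, which on elementary matrices gives $\alpha(e_{a,b}(\mu)) = e_{a-1,b-1}(\mu)$: the off-diagonal support shifts to \emph{more negative} indices, not less. You asserted the opposite ($\alpha(e_{i,j}) = e_{i+1,j+1}$), and this is precisely why your attempted bookkeeping ends with ``$\alpha$ shifts $(-d,1)$ to $(-d+1,2)$, which is not $(-d-1,0)$'' and you are unable to reconcile the computation with the claimed support location. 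With the correct direction, the commutator $[e_{-(d+1),0}(\lambda), y]$ has its off-diagonal entry at $(-(d+1),1)$, and $\alpha$ carries it to $(-(d+2),0)$, so the inductive form $m_{y,\circ d}(x) = e_{-(d+1),0}(\lambda_d)$ is reproduced exactly. (The lemma statement in the paper contains its own off-by-one typo, claiming position $(-d,0)$ while the proof establishes $(-(d+1),0)$; this likely contributed to your confusion, but the proof's version is the consistent one.)

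There is a second, smaller gap: you never actually argue that $[e_{-(d+1),0}(\lambda),y]$ is a \emph{single} elementary matrix rather than a product with further entries. The paper handles this with two structural observations that your proposal does not make: (i) since $m_y$ composes a commutator with $\alpha$, and $\alpha$ preserves the lower central series $M_\infty = M_0 \supset M_1 \supset \cdots$, one has $m_{y,\circ d}(x) \in M_d$, forcing the first $d$ superdiagonals to vanish; (ii) all nonzero off-diagonal entries of $[e_{-(d+1),0}(\lambda),y]$ lie in the box $-(d+1)\le i,j\le 1$, because the box is closed under products and inverses and contains $e_{-(d+1),0}(\lambda)$ and $y$. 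Intersecting (i) and (ii) leaves only the position $(-(d+1),1)$ as a possibility, and a direct comparison of the $(-(d+1),1)$ entries of $e_{-(d+1),0}(\lambda)y$ and $ye_{-(d+1),0}(\lambda)$ shows that entry is nonzero. Your proposal gestures at the need for such control (``the hard part will be verifying that these auxiliary entries cancel exactly'') but defers it rather than supplying an argument, so even putting the $\alpha$-direction aside, the proof is incomplete.
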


\begin{proof}
First note that $\alpha$ preserves the lower central series   $M=M_0 \supset M_1 \subset M_2 \dots $ of    $M_\infty$
(here $M_{i+1}= [M, M_i]$), and therefore $m_{y, \circ d} (x) \in M_d$ for any $d\ge 1$.
This implies that the entries on $d$ diagonal above the main diagonal (that is, the entries at position$(k, k+l)$, $l \le d$) of the matrix
$m_{y, \circ d} (x)$ are zero.

Now prove by induction on $d$ the fact that $m_{y, \circ d} (x)\ne e = e_{-(d+1), 0} (\lambda)$, for some $\lambda \ne 0$.

The base $d=0$ follows from the definition of $x$.

Suppose that  $m_{y, \circ d} (x)\ = e_{-(d+1), 0} (\lambda)$, for some $\lambda \ne 0$.

Observe that all non-zero entries of $[e_{-(d+1) , 0} (\lambda), y]$ off the main diagonal belong to the square  $i \ge -(d+1), i \le 1$, $j \ge -(d+1), j\le 1$.
Indeed, observe that if a matrix belongs to this square, then its inverse also belongs to this square, and if two matrices belong to this square, then
their product belongs to this square.
Observe also that $[e_{-d+1, 0} (\lambda), y]$ belongs to $M_{d+1}$, and this is an uni-triangular matrix with  $d+1$ zero diagonals above the main diagonal.
We conclude that the only possible value of $[e_{-d+1, 0} (\lambda), y]$ off the main diagonal at $i=-(d+1)$, $j=1$. Let us show that this entry is non-zero.
To do this observe  that $e_{-d+1, 0}$ does not commute with $y$. Indeed, the entry at $-(d+1,1)$ of $e_{-(d+1) , 0} (\lambda) y$ is equal to $-\lambda$, and that the
the entry at $-(d+1,1)$ of $y e_{-(d+1) , 0} (\lambda) $ is equal to zero.

We have shown that $[e_{-d+1, 0} (\lambda), y]$ is $e_{-(d+1, 1)}(\lambda')$, for some $\lambda' \ne 0$. This implies that
$\alpha([x,y]) = \alpha (  [e_{-(d+1), 0} (\lambda), y]     ) = e_{-(d+2), 0} (\lambda')$, and this concludes the proof of the induction step and the proof of the lemma.

\end{proof}

Now take $x_2, x_3 \in G$ such that $[x_2, x_3] =  y$. Such $x_2$ and $x_3$ do exist: indeed, take $x_2= m_0$, $x_3= \phi$, observe that
$x_2 x_3 x_2^{-1} x_3^{-1}= m_0  \phi m_0 \phi^{-1} = m_0 m_1$. 
Put  $x_1=x=  e_{-1,0}$ and $x_4 =\phi$. 

Observe that for any $ x\in M$ it holds $w'(x, x_2, x_3, x_4) = m_y(x)$. Therefore
$w'_{\circ }(x_1, x_2, x_3, x_4) =m_{y, \circ d} (x_1) \ne e$ for any $d\ge 1$.
We have therefore shown that $w'$ is not an iterated identity of $G$.

\end{proof}

\section{Identities and iterated identities of group extensions, restriction of verbal map on an invariant subgroup and dynamics of its iterations} \label{sectionnormalsubgroups}

Let $w(x_1,\dots, x_n)$ be a word and $G$ be a group. 
Fix $x_2, \dots, x_n$ and consider
a verbal map from $G$ to $G$, $x \to w(x, x_2, \dots, x_n)$. We denote this verbal map
by $\phi_w=\phi_{w,x_2, \dots, x_n}$.

Now let $N$ be a normal subgroup of $G$ and suppose that $w$ is an identity of $G/N$. Then for any $x_2, \dots, x_n \in G$ and any $x \in G$ it holds
$w(x,x_2, \dots, x_n) \in N$. In other words $\phi_{w,x_2, \dots, x_n} (x) \in N$ for any $x\in G$, and, in particular, for any $x\in N$.
We can therefore study the restriction of the verbal map $\phi_{w,x_2, \dots, x_n}$ to $N$.

\begin{remark} \label{remarkuv}
Observe than any word $w=w(x_1,\dots, x_n)$ can be written as
$w(x_1,\dots, x_n)  =u(x_1, \dots, x_n)     v(x_2, \dots, x_n)$,
where  the word $u$ has the following form
$$
u(x_1, \dots, x_n)  =\prod_{i=1}^N   \alpha_i x_1^{l_i} \alpha_i^{-1},
$$
where $l_i \in \Z$  and $\alpha_i$ are some word in $x_2, \dots, x_n$.

If $w$ is an identity of $G/N$, then so is $v(x_2, \dots, x_n)$. 
In other words 
$v(x_2, \dots, x_n) \in N$ for any $x_2, \dots, x_n \in G$ 
\end{remark}

\begin{remark} \label{kernelinN}
Let $w(x_1, \dots, x_n)$ be a word, $G$ be a group and $N$ is a normal subgroup of $G$. Fix some $x_2, \dots, x_n$ and consider
the verbal map $\phi=\phi_{w, x_2, \dots, x_n}$ (that is, $\phi(x)= w(x, x_2, \dots, x_n)$.
Then either $\phi(n) \in N$ for any $n \in N$, or $\phi(n) \notin N$ for any $n \in N$.

\end{remark}

\begin{proof}
Use Remark \ref{remarkuv} and write $w= u(x_1, \dots, x_n) v(x_2, \dots, x_n)$. Since $u$ is a product of conjugates of $x_1$ and since $N$ is a normal subgroup, we conclude
that $\phi_u (x) \in N$ for any $ x \in N$.
If $v_{x_2, \dots, x_n} \in N$, then $\phi (x) \in N$ for any $x \in N$.

Otherwise, if $v_{x_2, \dots, x_n} \notin N$, then $\phi (x) \notin N$ for any $x \in N$.

\end{proof}

\begin{lemma} \label{lemmahomom}
Consider $w$, $v$ and $u$ defined in Remark \ref{remarkuv}. Fix $x_2, \dots, x_n$.
Consider the verbal map $\phi_u$ from $G$ to $G$ and its restriction to $N$.

If $N$ is  an Abelian normal subgroup, then the restriction of $\phi_u$ to $N$ defines a group homomorphism on $N$.

\end{lemma}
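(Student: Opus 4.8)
The plan is to exploit the very explicit form of $u$ from Remark~\ref{remarkuv}, namely $u(x_1,\dots,x_n)=\prod_{i=1}^{N}\alpha_i x_1^{l_i}\alpha_i^{-1}$ with $\alpha_i$ words in $x_2,\dots,x_n$. Fix $x_2,\dots,x_n\in G$; then each $\alpha_i$ evaluates to a fixed element $a_i\in G$, and the restriction of $\phi_u$ to $N$ is the map $n\mapsto \prod_{i=1}^{N} a_i n^{l_i} a_i^{-1}$. Since $N$ is normal, each conjugate $a_i n^{l_i}a_i^{-1}$ again lies in $N$, so this is a well-defined self-map of $N$; the point is to check it is a homomorphism of the abelian group $N$.

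First I would observe that for a fixed $a\in G$ the conjugation map $c_a\colon N\to N$, $n\mapsto a n a^{-1}$, is a group automorphism of $N$ (this uses only that $N$ is normal, not that it is abelian). Likewise, since $N$ is abelian, the power map $p_\ell\colon N\to N$, $n\mapsto n^\ell$, is an endomorphism of $N$ for every $\ell\in\Z$ — this is exactly where commutativity of $N$ is used, because $(mn)^\ell=m^\ell n^\ell$ holds in an abelian group. Composing, each map $n\mapsto a_i n^{l_i}a_i^{-1}=c_{a_i}\circ p_{l_i}(n)$ is an endomorphism of $N$.

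Next I would use that in an abelian group the pointwise product of finitely many endomorphisms is again an endomorphism: if $\psi_1,\dots,\psi_N\colon N\to N$ are endomorphisms and we set $\psi(n)=\psi_1(n)\psi_2(n)\cdots\psi_N(n)$, then $\psi(mn)=\prod_i\psi_i(mn)=\prod_i\psi_i(m)\psi_i(n)=\bigl(\prod_i\psi_i(m)\bigr)\bigl(\prod_i\psi_i(n)\bigr)=\psi(m)\psi(n)$, where the reshuffling of the product is legitimate precisely because $N$ is abelian. Applying this with $\psi_i=c_{a_i}\circ p_{l_i}$ gives that $\phi_u|_N\colon n\mapsto\prod_{i=1}^{N}a_i n^{l_i}a_i^{-1}$ is an endomorphism of $N$, which is the claim.

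There is no serious obstacle here; the only thing to be careful about is bookkeeping — namely making explicit that commutativity of $N$ is invoked twice (once to make the power maps $p_\ell$ homomorphisms, once to commute the factors when taking the product of the $\psi_i$), and that normality of $N$ is what guarantees $\phi_u$ sends $N$ into $N$ in the first place, as already recorded in Remark~\ref{kernelinN}. One may also remark that without the abelian hypothesis the statement genuinely fails, since even $n\mapsto n^2$ need not be a homomorphism, so the hypothesis is essential rather than a convenience.
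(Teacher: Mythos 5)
Your proof is correct and is essentially the paper's argument, just organized through named sublemmas (power maps and conjugations are endomorphisms, and pointwise products of endomorphisms of an abelian group are endomorphisms) rather than as a single inline computation. The paper simply writes out the chain of equalities $\phi_u(yz)=\prod_i\alpha_i(yz)^{l_i}\alpha_i^{-1}=\prod_i\alpha_i y^{l_i}\alpha_i^{-1}\alpha_i z^{l_i}\alpha_i^{-1}=\phi_u(y)\phi_u(z)$, invoking commutativity of $N$ at exactly the two places you identify.
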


\begin{proof} Let $y, z\in N$. Observe that $(yz)^{l_i}=y^{l_i}z^{l_i}$, and hence
$$
\phi_u(yz) =\prod_{i=1}^N   \alpha_i (yz)^{l_i} \alpha_i^{-1} =\prod_{i=1}^N  \left(  \alpha_i y^{l_i} \alpha_i^{-1}    \alpha_i z^{l_i} \alpha_i^{-1} \right)=
\prod_{i=1}^N   \alpha_i y^{l_i} \alpha_i^{-1}  \prod_{i=1}^N   \alpha_i z^{l_i} \alpha_i^{-1} =\phi_u(y) \phi_u(z).
$$

\end{proof}

\begin{lemma}  \label{commutewithaction}.
We consider again $w, u, v$ from Remark \ref{remarkuv}.
Let $G/N$ and $N$ be Abelian groups. Then

i)  for any $x_2, \dots, x_n \in G$ the restriction of $\phi_u$ to $N$ commutes with the action (by conjugation) of
$G$ on $N$.

ii) for any $x_2, \dots, x_n \in G$ any any $x\in N$ $\phi_u(x)$ belongs to the linear span of $a(x)$, $a\in G/N$.
\end{lemma}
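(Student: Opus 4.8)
The plan is to exploit the explicit product form $u(x_1,\dots,x_n)=\prod_{i=1}^N \alpha_i x_1^{l_i}\alpha_i^{-1}$ from Remark \ref{remarkuv}, together with the fact (Lemma \ref{lemmahomom}) that, since $N$ is Abelian, $\phi_u|_N$ is an endomorphism of $N$, so I may freely regard $N$ as a module over the group ring $\Z[G/N]$: the conjugation action of $g\in G$ on $x\in N$ depends only on the coset $gN$ because $N$ is Abelian (inner automorphisms by elements of $N$ are trivial), and I will write this action multiplicatively as $g\cdot x$ or additively using exponents. The point is that in this module language $\phi_u(x)=\sum_{i=1}^N l_i\,(\bar\alpha_i\cdot x)$, where $\bar\alpha_i\in G/N$ is the image of $\alpha_i$; that is, $\phi_u$ acts on $N$ as multiplication by the element $\lambda:=\sum_{i=1}^N l_i\bar\alpha_i\in\Z[G/N]$.

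For part i), I would argue as follows. Fix $g\in G$ with image $\bar g\in G/N$. For $x\in N$, conjugation sends $\alpha_i x^{l_i}\alpha_i^{-1}$ to $g\alpha_i x^{l_i}\alpha_i^{-1}g^{-1}=(g\alpha_i g^{-1})(g x^{l_i} g^{-1})(g\alpha_i g^{-1})^{-1}$; since $G/N$ is Abelian, $g\alpha_i g^{-1}$ and $\alpha_i$ have the same image $\bar\alpha_i$ in $G/N$, hence act the same way by conjugation on the Abelian group $N$. Therefore $g\cdot\phi_u(x)=\prod_i \bar\alpha_i\cdot(g\cdot x)^{l_i}=\phi_u(g\cdot x)$, which is exactly commutativity of $\phi_u|_N$ with the conjugation action. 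In module terms this is just the trivial observation that multiplication by $\lambda\in\Z[G/N]$ commutes with the $\Z[G/N]$-module structure, because $\Z[G/N]$ is commutative ($G/N$ being Abelian). For part ii), note that $\phi_u(x)=\sum_{i=1}^N l_i(\bar\alpha_i\cdot x)$ is by construction a $\Z$-linear combination of the elements $a\cdot x$, $a\in G/N$, so it lies in their linear span; this is immediate once part i)'s setup is in place.

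The only genuine subtlety, and the step I would be most careful about, is the reduction of the conjugation action of $G$ on $N$ to an action of $G/N$: this uses that $N$ is Abelian so that $nxn^{-1}=x$ for $n\in N$, $x\in N$, which legitimizes writing $\bar\alpha_i\cdot x$ and, crucially, writing $g\alpha_i g^{-1}$ as acting like $\alpha_i$. Everything else is bookkeeping: rewriting $\phi_u$ in the module notation (using $(yz)^{l_i}=y^{l_i}z^{l_i}$ as in Lemma \ref{lemmahomom} to split powers across the product, and the normality of $N$ to keep conjugates of $x$ inside $N$), and then invoking commutativity of the group ring $\Z[G/N]$. I would present part i) first in the explicit product form to keep the argument self-contained, then phrase part ii) as an immediate corollary, perhaps remarking that $\phi_u|_N$ is nothing but the action of the ring element $\sum l_i\bar\alpha_i$, which will be the convenient point of view for the iteration analysis in the sections that follow.
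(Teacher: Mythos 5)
Your proof is correct and follows essentially the same route as the paper: reduce the conjugation action of $G$ on $N$ to an action of $G/N$ (valid because $N$ is Abelian), then use commutativity of $G/N$ to slide $g$ past each $\alpha_i$ in the product defining $\phi_u$. The $\Z[G/N]$-module framing you add is a tidy reformulation of the same computation and does not change the substance.
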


\begin{proof}

First observe that the action of $g$ on $N$ depends only on the image of $g$ in $G/N$. Indeed, if $g=h n$, $n\in N$, then
$gmg^{-1} =  h n m  n^{-1} h^{-1} = h m h^{-1}$ ($n$ commute with $m$ since $m, n \in N$ and $N$ is Abelian).
To prove i) observe that by the definition of $\phi_i$ in Remark \ref{remarkuv}
$ \phi_u(n)= \prod \alpha_i n^{l_i} \alpha_i^{-1}$, and hence  
$$
g \phi_u(n) g^{-1} = g \prod \alpha_i n^{l_i} \alpha_i^{-1} g^{-1}= \prod  g \alpha_i n^{l_i} \alpha_i^{-1} g^{-1} = \prod  g \alpha_i  n^{l_i} (g\alpha_i)^{-1}
$$ 

Since $G/N$ is Abelian, $g \alpha_i = \alpha_i g$ in $G/N$, and hence the action by conjugation on $N$ of these two elements are the same. Therefore
$$
g \phi_u(n) g^{-1} = \prod   \alpha_i g n^{l_i} (\alpha_i g)^{-1} =  \prod \alpha _i     (g n g^{-1})^{l_i}    \alpha_i^{-1} = \phi_u (g n g^{-1}).
$$

ii) follows from the definition of $\phi_u$.
\end{proof}

Two following lemmas control the depth of a group which contains a cyclic normal subgroup, finite or infinite.

\begin{lemma} \label{lemmafinitenormalsubgroup}

Let $G$ be a group and $N$ be a finite normal subgroup of $G$. The cardinality of $N$ we denote by $\# N$.
  Let $w$ be an iterated identity of $G$.
Then
$$
s(w, G) \le (s(w, G/N)+1)( \# N +1)
$$

In particular, If $G/N$ is bounded, then $G$ is bounded.

\end{lemma}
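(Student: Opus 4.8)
The plan is to fix $x_2,\dots,x_n\in G$ and to study, for an arbitrary $x_1=g\in G$, the orbit $a_0=g$, $a_{k+1}=w(a_k,x_2,\dots,x_n)$, so that $a_k=w_{\circ k}(g,x_2,\dots,x_n)$ and $a_{k+1}=\phi_w(a_k)$ for the verbal self-map $\phi_w=\phi_{w,x_2,\dots,x_n}$. What I must show is that for \emph{every} such $g$ there is an index $d\le(s(w,G/N)+1)(\#N+1)$ with $a_d=e$. Set $D=s(w,G/N)$; one may assume $D<\infty$, since otherwise the bound is vacuous. As $w$ is an iterated identity of $G$ it is also one of $G/N$, so $D$ is a genuine finite depth; and since $N$ is normal, $\phi_w$ descends to $G/N$ and the images satisfy $\bar a_{k+1}=w(\bar a_k,\bar x_2,\dots,\bar x_n)$.

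I would then record two facts about the orbit $(a_k)$. First, because $w$ is an iterated identity of $G$, there is \emph{some} index $m$ (a priori with no bound) such that $a_m=e$. Second, for \emph{every} $k\ge0$, applying the depth bound of $G/N$ to the shifted tuple $(\bar a_k,\bar x_2,\dots,\bar x_n)$ --- whose successive iterates under $w$ in the first variable are $\bar a_k,\bar a_{k+1},\bar a_{k+2},\dots$ --- produces some $j\le D$ with $\bar a_{k+j}=\bar e$, i.e.\ $a_{k+j}\in N$. Hence the set $S=\{k\ge0:a_k\in N\}$ meets every window of $D+1$ consecutive nonnegative integers, and in particular, cutting $\{0,1,\dots,(\#N+1)(D+1)-1\}$ into $\#N+1$ consecutive blocks of length $D+1$, I obtain at least $\#N+1$ elements of $S$, all lying below $(\#N+1)(D+1)$.

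Since these $\#N+1$ indices carry values $a_k$ inside the finite set $N$ of size $\#N$, pigeonhole gives $k_1<k_2<(\#N+1)(D+1)$ with $a_{k_1}=a_{k_2}$; as $a_{k+1}$ depends only on $a_k$ (through the fixed map $\phi_w$), the orbit is eventually periodic, $a_k=a_{k+p}$ for all $k\ge k_1$ with $p=k_2-k_1\ge1$. Now I combine this with the first fact: if the index $m$ with $a_m=e$ is already $<k_2$ there is nothing more to do; otherwise $m\ge k_1$, and replacing $m$ by $k_1+((m-k_1)\bmod p)$ keeps the orbit equal to $e$ at an index in $\{k_1,\dots,k_2-1\}$. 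Either way $a_d=e$ for some $d<(\#N+1)(D+1)=(s(w,G/N)+1)(\#N+1)$, which is the asserted bound on $s(w,G)$.

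For the final clause: if $G/N$ is bounded then $s(G/N)<\infty$, and every iterated identity $w$ of $G$ --- being also one of $G/N$, so with $s(w,G/N)\le s(G/N)$ --- satisfies $s(w,G)\le(s(w,G/N)+1)(\#N+1)\le(s(G/N)+1)(\#N+1)$ by the above; thus $s(G)\le(s(G/N)+1)(\#N+1)<\infty$. Moreover $G$ does satisfy a nontrivial iterated identity: by Remark~\ref{boundedandidentities} the bounded group $G/N$ satisfies a nontrivial identity $w$, hence $w(y_1,\dots,y_n)\in N$ for all $y_i\in G$, and since every element of the finite group $N$ has order dividing $\#N$, the word $w^{\#N}$ is a nontrivial identity of $G$, hence a nontrivial (depth one) iterated identity. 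Therefore $1\le s(G)<\infty$, i.e.\ $G$ is bounded. The one point that needs care is the second fact above: the depth hypothesis on $G/N$ must be invoked at every shifted tuple $(\bar a_k,\bar x_2,\dots,\bar x_n)$, not merely at the initial one; granting that, the rest is pigeonhole together with the determinism of the iteration, and I do not anticipate a serious obstacle.
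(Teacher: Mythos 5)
Your proof is correct and follows essentially the same approach as the paper's: use the depth bound on $G/N$ to find at least $\#N+1$ visits of the orbit to $N$ within the first $(\#N+1)(s(w,G/N)+1)$ iterations, pigeonhole to get two equal values in $N$ and hence eventual periodicity of the deterministic iteration, and combine with the (unbounded) iterated-identity hypothesis to locate a trivial value within the periodic block. Your write-up is more explicit than the paper's (which leaves the pigeonhole and periodicity steps somewhat compressed), but the decomposition and key ideas coincide.
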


\begin{proof} Ift $w(x_1, \dots, x_n)$ be an iterated identity of $G$, then $w$ is an iterated identity of $G/N$. Denote by $D$ the iterational depth $s(w, G/H)$.
For any $x_1 \dots, x_n \in G/N$ there exists $d \le D$ such that $w_{\circ d}(x_1, \dots, x_n) =e$ in $G/N$. In other words, for any $x_1, \dots, x_n$ in $G$ here exists $d \le D$ such that $w_{\circ d}(x_1, \dots, x_n)  \subset N$.
Fix $x_2, \dots, x_n$ and consider some $x\in N$. Since $w$ is an iterated identity of $G$ there exists $m$ (depending on $x$ and $x_2, \dots, x_n$) such that
$w_{\circ m} (x, x_2, \dots, x_n) =e$. 

Let $\phi:G \to G$ be the verbal map $\phi(x)= w(x, x_2, \dots , x_n)$. 
Let $R$ denotes the cardinality of $N$.
Observe that for any $x\in G$ there exist $1\le m_1< m_2< m_3< m_{R+1} $ such that
$\phi_{\circ m_j} (x) \in N$ for any $j: 1 \le j \le R+1$ and such that  $m_1 \le  D+1$, such that $m_{j+1}- m_j \le D+1$ for any $j \le R$ and such that

Observe that  $1\le m_1< m_2< m_3< m_{R+1} \le (D+1)(R+1)$.
Therefore, there exist at least among these $R+1$ elements there exist at least two equal ones, that is, there exist $r,s: 1\le r<s \le (R+1) (D+1)=$ such that
$\phi_{\circ r} (x)=  \phi_{\circ s} (x)$, This implies that for any $t \ge 0$ $\phi_{\circ r+t} (x)=  \phi_{\circ s+t} (x)$.
This implies that if $\phi_{\circ m}(x) =e$ for some $m \ge 1$, then there exists $ m \le (R+1)(D+1)$ such that $\phi_{\circ m}(x) =e$

We have shown therefore  that  $s(w, G) \le (s(w, G/N)+1)( \# N +1)$.

Now assume that $G/N$ is bounded. Then $s(G/N)<\infty$, and thus $s_{G} < \infty$. Since $G/N$ is bounded, we know from Remark \ref{boundedandidentities} that
$G/N$ satisfies at least one non-trivial identity, which we denote by $w$. Then $G$ satisfies an identity $w^R$, and it is clear that $w^R$ is not freely equivalent to a trivial word.
Thus, we know that $s(G) < \infty$ and that $G$ satisfies a non-trivial identity. Using  \ref{boundedandidentities} we conclude that $G$ is bounded.

\end{proof}

\begin{lemma} \label{lemmacyclicquotient}
Let $G$ be a group and $N$ be an infinite normal cyclic subgroup of $G$.
For any iterated identity $w$ of $G$ it holds $s(w,G) \le 2 s(w, G/N)+1$.

 In particular, if If $G/N$ is bounded, then $G$ is bounded.

\end{lemma}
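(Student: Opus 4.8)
The plan is to follow the strategy of the previous lemma (Lemma \ref{lemmafinitenormalsubgroup}), replacing the pigeonhole-on-a-finite-set argument by an argument that exploits the structure of the verbal map restricted to an infinite cyclic group. Write $w = u(x_1,\dots,x_n)\,v(x_2,\dots,x_n)$ as in Remark \ref{remarkuv}, where $u$ is a product of conjugates of powers of $x_1$. Since $w$ is an iterated identity of $G$, it is one of $G/N$; let $D = s(w,G/N)$. Then for every tuple $x_1,\dots,x_n\in G$ there is $d\le D$ with $w_{\circ d}(x_1,\dots,x_n)\in N$. Fix $x_2,\dots,x_n$ and set $\phi=\phi_{w,x_2,\dots,x_n}$. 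By Remark \ref{kernelinN}, either $\phi$ maps $N$ into itself or $\phi$ maps $N$ entirely off $N$; in the latter case no iterate starting in $N$ ever returns to $N$, so the only way $w$ can be an iterated identity is if this case never produces a point we must kill, and in the relevant case we have $\phi(N)\subseteq N$.

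The key new ingredient is to analyze $\phi$ restricted to $N\cong\Z$. Because $N$ is normal and cyclic, conjugation by any $g\in G$ acts on $N$ either as the identity or as inversion; hence every conjugate $\alpha_i x_1^{l_i}\alpha_i^{-1}$, evaluated at $x_1 = t^k\in N$ (where $t$ generates $N$), equals $t^{\pm k l_i}$. Therefore the restriction of $\phi_u$ to $N$ is the homomorphism $t^k\mapsto t^{ck}$ for a fixed integer $c = \sum_i \varepsilon_i l_i$ (this is a special case of Lemma \ref{lemmahomom}), and $\phi$ itself restricted to $N$ has the affine form $t^k \mapsto t^{ck + b}$ for a fixed integer $b$ (coming from $v(x_2,\dots,x_n)\in N$). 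Now I run the dynamics of the affine map $k\mapsto ck+b$ on $\Z$. If this map has a fixed point $k_0$ and some iterate of $x$ lands on $t^{k_0}$, the orbit is eventually constant and equals $e$ only if $t^{k_0}=e$, i.e. $k_0=0$; if $|c|\ge 2$ there is at most one fixed point and the orbit either reaches it quickly or escapes to infinity and never returns; if $c=0$ the map is constant after one step; if $c=1$ and $b\ne 0$ the orbit is strictly monotone and never returns to any earlier value; if $c=1,b=0$ it is already constant; if $c=-1$ the orbit is eventually $2$-periodic. In every case: once $x$ enters $N$ (after at most $D$ steps of $\phi$), it takes at most one further application of $\phi$ for the orbit inside $N$ to become eventually periodic with period $\le 2$, so if the orbit ever hits $e$ it must do so within one more step after entering $N$ — hence within $D+1$ steps total. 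Combining, $w_{\circ(D+1)}(x,x_2,\dots,x_n)$, once it lies in $N$, reaches $e$ within one more iteration, giving the bound $s(w,G)\le 2D+1$. The last sentence then follows exactly as in Lemma \ref{lemmafinitenormalsubgroup}: if $G/N$ is bounded it satisfies a nontrivial identity $w$, hence so does $G$, and $s(G)\le 2s(G/N)+1<\infty$, so $G$ is bounded by Remark \ref{boundedandidentities}.

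The main obstacle I expect is making the affine-dynamics bookkeeping airtight: one has to be careful that "the orbit of $x$ under $\phi$" may leave $N$ and re-enter it, so the clean affine description only applies once we are sure we are inside $N$ and staying there (which is where Remark \ref{kernelinN} is essential), and one must check the edge cases $c\in\{-1,0,1\}$ separately since the map need not be a contraction. A secondary subtlety is that the integer $c$ depends on the chosen generator direction and on whether each $\alpha_i$ acts trivially or by inversion on $N$, but since $c$ is fixed once $x_2,\dots,x_n$ are fixed, the eventual period bound $\le 2$ is uniform, which is all that is needed for the stated estimate $s(w,G)\le 2s(w,G/N)+1$.
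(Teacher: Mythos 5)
Your overall strategy coincides with the paper's: decompose $w = u\,v$ via Remark~\ref{remarkuv}, restrict the verbal map to $N\cong\Z$, recognize the restriction as an affine map $k\mapsto ck+b$, and analyze its dynamics. There is, however, a genuine gap in the step where you reduce to the case $\phi(N)\subseteq N$. You assert that if $\phi$ maps $N$ entirely off $N$ then no iterate starting in $N$ ever returns to $N$, and hence that the relevant case is $\phi(N)\subseteq N$. This does not follow from Remark~\ref{kernelinN}. Applied to each iterate $w_{\circ l}$, that remark gives a dichotomy for each $l$ separately, and the set $\{l\ge 1: \phi_{\circ l}(N)\subseteq N\}$ (equivalently, $\{l: w_{\circ l}(e,x_2,\dots,x_n)\in N\}$) is only a sub-semigroup of $\Z_{\ge 1}$ closed under subtraction, hence of the form $D_0\Z_{\ge 1}$ for some period $D_0\ge 1$. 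Nothing forces $D_0=1$: $v_1\notin N$ can coexist with $v_{D_0}\in N$ for some $D_0>1$ when $G/N$ is non-abelian (for $G/N$ abelian the vanishing of $\bar u$ in $G/N$ does force $D_0\in\{1,\infty\}$, but the lemma makes no such assumption). The paper's proof introduces $D_0\le s(w,G/N)+1$, applies Remark~\ref{remarkuv} to $w_{\circ D_0}$ rather than to $w$, and analyzes the resulting self-map $\psi$ of $N$; the count $D+D_0\le 2D+1$ is precisely the source of the factor $2$ in the statement, whereas your version would only account for $D+1$ and gives no explanation for that factor.

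A secondary imprecision is in the affine dynamics. The sentence claiming that ``in every case'' the orbit inside $N$ becomes eventually periodic with period $\le 2$ after one step is false for $|c|\ge 2$ or $c=1,\ b\ne 0$, where the orbit is unbounded; those cases must be \emph{excluded}, and the exclusion is exactly where the iterated identity is used. Since the iterated identity must kill \emph{every} $x\in N$, and $\psi_{\circ l}(x)=c^l x+\omega_l$ with $\omega_l$ ranging over a finite set, taking $|x|$ large rules out $c\ne 0$, and then taking any $x$ rules out $b\ne 0$; hence $\psi\equiv 0$ on $N$. Your conditional formulation (``if the orbit ever hits $e$ it does so within one step'') does not by itself give a uniform bound until you have ruled out the escaping regimes. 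It is worth stating and proving $\psi\equiv 0$ directly, which is what the paper does with its growth argument showing $T=0$ and then $v=e$.
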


\begin{proof}

Let $w(x_1, \dots, x_n)$ be an iterated identity of $G$. Since $w$ is an iterated identity of $G/N$ and since $G/N$ is bounded, there exists $D$ such that for any $x_1, \dots, x_n$ in $G$ here exists $d \le D$ such that $w_{\circ d}(x_1, \dots, x_n)  \subset N$.

Fix $x_2, \dots, x_n$.
For any $d \ge 0$ consider the verbal map $\phi_d$,
    defined by $w_{\circ d}$.  By remark \ref{kernelinN} we know  that either $\phi_d(x) \in N$ for any $x\in N$, or $\psi_d(x) \notin N$ for any $x \in N$.
 This implies that if  $d''<d'$ and $\phi_d(e), \phi_d'' (e) \in N$, then $\phi_{d'-d''}(e) \in N$. Let $D$ be a common divisor of such $d'$. We have
 $\phi_l (e) \in N$ if and only if $l$ is divided by $D$, and , moreover, for any $x \in N$  we know that  $\phi_l (e) \in N$ if and only if $l$ is divided by $D$.
 In particular, if  $\phi_l (x) =e$ for some $x\in N$,  then $l$ is divided by $D$. Observe that $D \le d+1$, since there exists $m: 1\le m \le d+1$ such that
 $\phi_m(e) \in N$.
 
Apply Remark \ref{remarkuv} to $w_{\circ D}$ and consider $u$ and $v$ defined in this remark. For any $x_2, \dot, x_n \in G$, the restriction of the verbal map of $w_{\circ D}$ has the form
$\psi(x) = \phi_u (x) +v$, where $\phi$ is a homomorphism of (an infinite cyclic group) $N$, and $v \in N$.

We have already shown that for any $x\in N$ there exists an integer $l$ such that $\phi_{Dl}(x)= \psi_{\circ l} (x)=e$.

Since $\phi_u$ is a homomorphism from $\Z$ to $\Z$, there exists an integer $T$ such that $\phi_u(x) = Tx$. Since $\psi_{\circ m(e)}=0$ for some $m$, it follows that $v+\phi_u(v)+ \dots \phi_{u, \circ (m-1)}(v) =0$. This implies that $\phi_u(e)$ takes finitely many values, we denote this finite set by $\Omega$. Observe that for any $l\ge 1$ there exists
$\omega \in \Omega$ such that $\psi_{\circ l} (x) = T^l x+ \omega$, for any $x \in N$. Assume that $T \ne 0$. Note that then for any sufficiently large $x$ all values of
$\psi_{\circ l }(x)$ are non-zero. This contradiction shows that $T=0$.  

We know , therefore, that  $\phi_(u)(x)=0$ for any $x\in N$, and hence $\psi_{\circ l}(x)= v$ for any $l \ge 1$, and hence $v=e$. This shows that $\psi(x)=0$ for any $x \in N$, and this implies that for any $x_1, \dots, x_n$ there exists $l \le d+D \le 2d+1$ such that 
$w_{\circ l}(x_1, \dots, x_n)=e$. We have shown therefore that $s(w,G) \le 2 s(w, G/N)+1$.

Finally, observe that  $G$ satisfies a non-trivial identiy if $G/N$ satisfies a non-trivial identity. Indeed, let $w(x_1, \dots, x_n)$ be an identity of
$G/N$. Consider $w'(x_1, \dots, x_{2n})= [w*(x_1, \dots, x_n), w(x_{n+1}, \dots, x_{2n})]$. It is clear that $w'$ is not freely equivalent to an empty word
if $w$ has this property, and it is clear that $G$ satisfies the identity $w'$.
\end{proof}

Lemma \ref{lemmafinitenormalsubgroup} and Lemma \ref{lemmacyclicquotient}  imply the following

\begin{corollary}
Any finitely generated polycyclic group is bounded. 
\end{corollary}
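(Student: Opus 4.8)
The plan is to proceed by induction on the length of a subnormal cyclic series for a polycyclic group $G$, that is, a series $1 = G_0 \triangleleft G_1 \triangleleft \cdots \triangleleft G_k = G$ in which each quotient $G_{i+1}/G_i$ is cyclic (finite or infinite). The base case $k=0$ is the trivial group, which is bounded by Example \ref{examplefinite}. For the inductive step, set $N = G_1$, which is a cyclic normal subgroup of $G$ (normality of $G_1$ in $G$ can be arranged since $G_1$ is the bottom term of a subnormal series refining to a normal series, or by passing to an appropriate characteristic subgroup; alternatively one works with a normal cyclic series, which every polycyclic group admits after refinement). Then $G/N$ is polycyclic with a cyclic series of length $k-1$, so by the inductive hypothesis $G/N$ is bounded.

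Now the two cases split exactly according to the two lemmas already available. If $N$ is finite, then Lemma \ref{lemmafinitenormalsubgroup} applies: since $G/N$ is bounded, $G$ is bounded. If $N$ is infinite cyclic, then Lemma \ref{lemmacyclicquotient} applies: since $G/N$ is bounded, $G$ is bounded. In either case $G$ is bounded, completing the induction.

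One point that needs a little care is the passage from ``polycyclic'' to ``has a normal cyclic series''. The standard fact is that every polycyclic group has a subnormal series with cyclic quotients, and one can always refine and rearrange to obtain one in which, at each stage, the bottom term is normal in the whole group — concretely, one may take $N$ to be a nontrivial term of the derived series or lower central series of a suitable finite-index subgroup, but the cleanest route is simply to invoke that a polycyclic group is Noetherian and pick $N$ to be a minimal nontrivial normal subgroup contained in the last nontrivial term of the derived series; such an $N$ is abelian and, being Noetherian, finitely generated abelian, hence has a characteristic infinite cyclic subgroup or is finite. Either way one extracts a cyclic normal subgroup $N \triangleleft G$ with $G/N$ again polycyclic, and the induction goes through on, say, the Hirsch length plus the order of the torsion, which strictly decreases.

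The main obstacle, such as it is, is purely bookkeeping: ensuring the normal subgroup one peels off is genuinely normal in $G$ (not merely subnormal) and genuinely cyclic, so that Lemmas \ref{lemmafinitenormalsubgroup} and \ref{lemmacyclicquotient} are literally applicable, and choosing an induction parameter that visibly decreases. No new estimate on iterational depth is required beyond iterating the bounds $s(w,G) \le (s(w,G/N)+1)(\#N+1)$ and $s(w,G) \le 2s(w,G/N)+1$ along the series; in fact this yields an explicit (series-length-dependent) bound on $s(G)$, though the statement only claims finiteness. The verification that $G$ satisfies a nontrivial identity at each stage is also already contained in the two lemmas, so boundedness in the sense of Definition \ref{defbounded} (namely $1 \le s(G) < \infty$) follows.
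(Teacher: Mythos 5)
Your plan is the one the paper has in mind: peel a cyclic (or finite) normal subgroup off the bottom and invoke Lemma \ref{lemmafinitenormalsubgroup} or Lemma \ref{lemmacyclicquotient}. But the step where you manufacture the normal subgroup $N$ to peel off has a genuine gap, and your proposed fix does not close it.

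You write that one may ``pick $N$ to be a minimal nontrivial normal subgroup contained in the last nontrivial term of the derived series; such an $N$ is \dots\ finitely generated abelian, hence has a characteristic infinite cyclic subgroup or is finite.'' Both halves of this are wrong. First, a polycyclic group need not possess a minimal nontrivial normal subgroup at all: already in $G=\Z$ the chain $\Z \supset 2\Z \supset 4\Z \supset \cdots$ shows there is none (Noetherian gives maximal, not minimal, elements). Second, and more importantly, a finitely generated torsion-free abelian group of rank $r\ge 2$ has \emph{no} characteristic infinite cyclic subgroup: the only $\mathrm{GL}_r(\Z)$-invariant subgroups of $\Z^r$ are $m\Z^r$, none of which are cyclic. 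So even after locating an abelian normal subgroup $A\trianglelefteq G$, there is in general no way to extract from it a cyclic subgroup normal in $G$.

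This is not a removable technicality. Take $G = \Z^2 \rtimes_\phi \Z$ with $\phi$ a hyperbolic matrix such as $\left(\begin{smallmatrix}2&1\\1&1\end{smallmatrix}\right)$. This $G$ is polycyclic (and torsion-free, so it has no nontrivial finite normal subgroup). Every nilpotent normal subgroup lies in the Fitting subgroup $\Z^2$, and a $G$-invariant infinite cyclic subgroup of $\Z^2$ would be an integral eigenline of $\phi$ with rational eigenvalue; since the eigenvalues of $\phi$ are irrational, no such line exists. Thus $G$ has no nontrivial normal subgroup that is finite or infinite cyclic, and neither Lemma \ref{lemmafinitenormalsubgroup} nor Lemma \ref{lemmacyclicquotient} can be applied to start your induction. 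The induction as you have set it up therefore only covers supersolvable groups, i.e.\ those admitting a cyclic series all of whose terms are normal in $G$, not the full class of polycyclic groups. (You correctly sensed that ``polycyclic'' does not mean ``has a normal cyclic series'' and tried to patch this, but the patch fails for the reasons above.) To reach all polycyclic groups along these lines one would need to strengthen Lemma \ref{lemmacyclicquotient} to allow $N$ a finitely generated free abelian normal subgroup of arbitrary rank (which is essentially the content of Lemma \ref{lemmafinitetensor} later in the paper), or use the torsion-free nilpotent normal subgroup of finite index and a different reduction; as written, the two lemmas you cite do not suffice.
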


\subsection{Nilpotent groups}

\begin{remark} \label{remarkuvnilpotent}
Any word $w(x_1, \dots, x_n)$ can be written as
$$
w(x_1, x_2, \dots , x_n) = \prod_{j=1}^N [ x_1^{l^j},  u_{j}] x_1^r v(x_2, \dots, x_n),
$$
where $u_j$ are words in $x_2, \dots, x_n$, $l_j$ are integers  and $r$ is the total number of $x_1$ in $w$ .
We denote 
$$
u(x_1, \dots, x_n) =  \prod_{j=1}^N [ x_1^{l^j},  u_{j}].
$$
\end{remark}

\begin{proof} Write a word $w$ as a product of words $(x_2, x_3, \dots, x_n)$ and powers of $x_1$ and prove the statement of the remark by induction on the number of terms in such product.

\end{proof}

It is not difficult to describe all $E$ type iterated identities of a nilpotent group:

\begin{example}  \label{examplenilpotent}

Let $G$ be a nilpotent group and $w(x_1, x_2, \dots, x_n)$ be a word.  Let $u, v$ be the words defined in Remark \ref{remarkuvnilpotent}
and $r$ be the total number of $x_1$ in $w$.

i) Suppose that  there exists an integer $m>0$ 
 such that for  any element $g\in G$ there exists  an integer $t >0$ such that 
such that  
$g^{m^t}=e$.
Take $m$ to be the minimal positive integer with this property.
The word $w$ is an $E$ type iterated identity of $G$ if and only if  $r$  is divided by $m$ and $v(x_2, \dots, x_n)$ is an identity of $G$.

ii) Otherwise (for example, if $G$ contains at least one element of infinite order) the word $w$ is an $E$ type iterated identity of $G$ if and only if $r=0$ 
and $v(x_2, \dots, x_n)$ is an identity of $G$.

\end{example}

\begin{proof}
 First suppose that $w$ is an $E$ type  iterated identity of $G$. Consider $x_1=x$, $x_2= x_3= \dots x_n=e$.
 Observe that for all $d\ge 1$ it holds $w_{\circ d}(x_1, x_2, \dots, x_n) = x^{r^d}$. This implies $r=0$ that under assumption of ii) and that $r$ is divided by $m$ under assumption of i).
 
Let us show that $v$ is an identity of $G$. Suppose not. Then there exists $x_2, \dots, x_n$ such that $v(x_2, \dots, x_n) \ne e$.

If $G$ satisfies the assumption of ii), take $M\ge 0$ such that $v(x_2, \dots, x_n)$ belongs to the $M$-th term of the lower central series of $G$ and 
does not belong to the $M+1$-th term of this series. Observe that for any $x_1$ and any $d \le M$ the iteration $w_{\circ d}(x_1, x_2, \dots, x_n)$ belongs to the $d$-th term of the lower central series of $G$. Observe also that  $w_{\circ (M+1)}(x_1, x_2, \dots, x_n)$ (as well as $w_{\circ d}(x_1, x_2, \dots, x_n)$ for any $d\ge M+1$) belongs to the $M$-th term of the 
lower central series but does not belongs to the $M+1$-th term. This shows that $w_{\circ d}(x_1, x_2, \dots, x_n)$ is not equal to $e$, for any $x_1$ and any $d \ge M$.
This is a contradiction with the fact that $w$ is an iterated identity of $G$.

If $G$ satisfies the assumption of i), observe that any finitely generated subgroup of $G$ is finite. 
 Consider  the following central series of $G$: $\lambda_0(G)=G$, $\lambda_{i+1}(G)$ be the normal subgroup generated  by $[\lambda_i(G),G]$ and $ \lambda_i(G)^m$. 
We consider, as before, $x_2, \dots, x_n$ such that $v(x_2, \dots, x_n) \ne e$. Put $x_1=e$. Let $G'$ be the subgroup of $G$ generated by $x_1, x_2, \dots, x_n$.
Observe that the series $\lambda_d(G')$ (of the finite group $G'$) has a finite number of non-trivial terms. Since  $v(x_2, \dots, x_n) \ne e$, there exists $M\ge 0$ such that
 $v(x_2, \dots, x_n) \ne e$ belongs to $\lambda_M(G')$ and does not belong to $\lambda_{(M+1)}(G')$.
 
 Observe that for any $x_1$ and any $d \le M$ $w_{\circ d}(x_1, x_2, \dots, x_n)$ belongs to $\lambda_d(G')$. Observe also that  $w_{\circ M+1}(x_1, x_2, \dots, x_n)$ (as well as $w_{\circ d}(x_1, x_2, \dots, x_n)$ for any $d\ge M+1$) belongs to  $\lambda_M(G')$-th but does not belongs to  $\lambda_{M+1}(G')$-th term. This
 contradicts the fact  that $w$ is an iterated identity of $G$.

 Now suppose that $r_i$ satisfy the assumption of i) and ii) correspondingly. Let us show that $w$ is an $E$ type iterated identity of $G$.

 Under assumption of ii) it is clear that $w_{\circ d}(x_1, x_2, \dots , x_n) = x^{N}$ belongs to the $d$-the term of the lower central series of $G$.
 
 If $G$ satisfies the assumption of i) 
 and $w$ satisfies the condition of i), it is clear that  $w_{\circ d}(x_1, \dots, x_n)$ belongs to $\lambda_d(G')$, where $G'$ is a  subgroup generated by $x_1, \dots, x_n$.
 Observe that $G'$ is a finite nilpotent group such that for any $g\in G'$ there exists $t$ such that $g^{m^t}=e$. This implies that  $\lambda_d(G')$ has finite length (that is, there exists a finite number of non-zero terms in this series), and therefore $w$ is an iterated identity of $G$.

\end{proof}

\begin{corollary} Let $G$ be a nilpotent group. Let $\Omega_n \subset F_n$ be the set of $E$ type iterated identities of $G$.
Then $\Omega_n $ is a normal subgroup of the free group $F_n$, for any $n\ge 1$.

\end{corollary}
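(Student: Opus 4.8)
The plan is to avoid any direct manipulation of iterated words and instead read the subgroup structure of $\Omega_n$ straight off the characterization obtained in Example~\ref{examplenilpotent}. For a word $w(x_1,\dots,x_n)$ write $w=u\cdot x_1^{r}\cdot v(x_2,\dots,x_n)$ as in Remark~\ref{remarkuvnilpotent}. The first step is to recognize that the two data $(r,v)$ entering the characterization are the images of $w$ under two group homomorphisms out of $F_n$: the integer $r$ is $\epsilon_1(w)$, where $\epsilon_1\colon F_n\to\Z$ is the "$x_1$-exponent-sum" homomorphism (commutators $[x_1^{l_j},u_j]$ and the $x_1$-free word $v$ contribute $0$), and the word $v$ is $\pi(w)$, where $\pi\colon F_n\to F_n$ is the retraction sending $x_1\mapsto e$ and fixing $x_2,\dots,x_n$ (substituting $x_1=e$ into the displayed form of $w$ kills $u$ and $x_1^r$, so $\pi(w)=w(e,x_2,\dots,x_n)=v$); its image is $F_{n-1}:=\langle x_2,\dots,x_n\rangle$. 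In particular $v$ and $r$ are well defined even though the decomposition in Remark~\ref{remarkuvnilpotent} is not unique.

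The second step is to rephrase Example~\ref{examplenilpotent} as the assertion that there is a subgroup $H\le\Z$ — namely $H=m\Z$ in case i) and $H=\{0\}$ in case ii), and in either case a subgroup — such that
\[
\Omega_n=\bigl\{\,w\in F_n \ :\ \epsilon_1(w)\in H \ \text{ and }\ \pi(w)\in\mathcal I\,\bigr\}=\epsilon_1^{-1}(H)\cap\pi^{-1}(\mathcal I),
\]
where $\mathcal I\subseteq F_{n-1}$ denotes the set of ordinary identities of $G$ in the variables $x_2,\dots,x_n$ (for $n=1$ this is the degenerate case $F_{0}=\{e\}$, $\mathcal I=\{e\}$, $\pi\equiv e$).

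The final step is then immediate from two standard facts. First, $\epsilon_1^{-1}(H)$ is the preimage of a subgroup of the abelian group $\Z$ under a homomorphism, hence a normal subgroup of $F_n$. Second, $\mathcal I$ is the set of elements of $F_{n-1}$ killed by every homomorphism $F_{n-1}\to G$, hence a verbal (in particular normal) subgroup of $F_{n-1}$; and for $g\in F_n$ and $w\in\pi^{-1}(\mathcal I)$ one has $\pi(gwg^{-1})=\pi(g)\,\pi(w)\,\pi(g)^{-1}$ with $\pi(g)\in F_{n-1}$ and $\pi(w)\in\mathcal I$, so $\pi(gwg^{-1})\in\mathcal I$ by normality of $\mathcal I$ in $F_{n-1}$; thus $\pi^{-1}(\mathcal I)$ is normal in $F_n$. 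An intersection of two normal subgroups is normal, so $\Omega_n\trianglelefteq F_n$. I do not expect a real obstacle here: the entire content is Example~\ref{examplenilpotent}, which is assumed. The only point requiring a little care is that $\mathcal I$ is normal only in $F_{n-1}$, not a priori in all of $F_n$ — which is exactly why it is essential that $\pi$ always takes values inside $F_{n-1}$, so that the conjugating element $\pi(g)$ lies in $F_{n-1}$.
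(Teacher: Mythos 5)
Your proof is correct, and it takes a genuinely different route from the paper's. The paper proves the corollary by directly verifying closure under products and conjugates: it shows that if $w_1,w_2$ are iterated identities then $(w_1w_2)_{\circ d}$ and $(w'w_1(w')^{-1})_{\circ d}$ still land in the $d$-th term of the lower central series (case~ii) or in $\lambda_d(G')$ (case~i), essentially rerunning the mechanism of Example~\ref{examplenilpotent} on the product and the conjugate. You instead take the characterization in Example~\ref{examplenilpotent} at face value and observe that the two pieces of data $(r,v)$ it depends on are $\epsilon_1(w)$ and $\pi(w)$ for two group homomorphisms $\epsilon_1\colon F_n\to\Z$ and $\pi\colon F_n\to F_{n-1}$, so that $\Omega_n=\epsilon_1^{-1}(H)\cap\pi^{-1}(\mathcal I)$ is an intersection of two preimages of subgroups, hence a subgroup; normality of $\pi^{-1}(\mathcal I)$ could even be obtained in one line since $\pi$ is surjective and $\mathcal I$ is normal (verbal) in $F_{n-1}$, rather than via the explicit conjugation computation you give, but your argument is fine. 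Your route is structurally cleaner: it makes the well-definedness of $(r,v)$ transparent, it automatically handles closure under inverses (which the paper's proof does not explicitly mention), and it isolates exactly what is used — that the characterizing conditions are homomorphic images landing in subgroups. The paper's route has the virtue of not relying on the precise ``if and only if'' form of Example~\ref{examplenilpotent} and would generalize more easily to settings where one has only a sufficient condition for membership in $\Omega_n$, but here both arguments rest on the same example and yours is the more economical reading of it.
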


\begin{proof}
If $G$ satisfies the assumption of ii) and $w_1$ and $w_2$ satisfy the conditions of ii), observe that their product $w= w_ 1 w_2$ , as well as for any conjugate
$w= w' w_1 (w')^{-1}$
it holds $w_{\circ d}(x_1, \dots, x_n)=u_{\circ d}(x_1, \dots, x_n)$ belongs to the $d$-th term of the lower central series of $G$.

Observe also that if  $G$ satisfies the assumption of i) and $w_1$ and $w_2$ satisfy the conditions of i), then  their product $w= w_ 1 w_2$ , as well as any conjugate
$w= w' w_1 (w')^{-1}$
it holds $w_{\circ d}(x_1, \dots, x_n)$ belongs to $\lambda_d(G')$, where $G'$ is a (finite nilpotent) subgroup generated by $x_1, \dots, x_n$.

\end{proof}

\section{Examples of iterational depth. Wreath products}  \label{sectionwreath}

\begin{proposition} \label{propositionwithz}
$s(\Z \wr \Z) =2$
\end{proposition}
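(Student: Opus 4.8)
The plan is to prove the two inequalities $s(\Z\wr\Z)\ge 2$ and $s(\Z\wr\Z)\le 2$ separately. For the lower bound, I would exhibit a single non-trivial $E$-type iterated identity $w$ of $G=\Z\wr\Z$ together with elements $x_1,\dots,x_n\in G$ for which $w_{\circ 1}(x_1,\dots,x_n)\ne e$ but $w_{\circ 2}(x_1,\dots,x_n)=e$; this forces $s(w,G)\ge 2$ and hence $s(G)\ge 2$. A natural candidate is a commutator identity such as $w(x_1,x_2)=[x_1,[x_1,x_2]]$ or $w(x_1,x_2)=[x_1,x_2]$ itself iterated: since $G'$ is abelian (metabelian), $[x_1,[y,z]]=e$ whenever $x_1$ ranges over the base group, but one level up the iteration need not vanish in one step. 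Concretely I would take $x_2$ to be the generator $t$ of the acting $\Z$ and $x_1$ a suitable element of the base $\bigoplus\Z$, compute $w(x_1,t)$, check it is a nonzero element of the base group lying in the kernel of the next verbal map, and that $w_{\circ 2}=e$ there. The main point is just that $G$ is metabelian but not abelian, so depth $1$ (which by Example~\ref{exampleabelian}-type reasoning is essentially the abelian case) is insufficient.

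For the upper bound $s(G)\le 2$, let $w$ be an arbitrary non-trivial $E$-type iterated identity of $G$; I must show the iterational depth of $w$ in $G$ is at most $2$. Since $\Z\wr\Z$ contains an element of infinite order and is not abelian but is metabelian, apply Remark~\ref{remarkuv} to write $w=u(x_1,\dots,x_n)\,v(x_2,\dots,x_n)$ with $u$ a product of conjugates $\alpha_i x_1^{l_i}\alpha_i^{-1}$. Let $N=\bigoplus_{\Z}\Z$ be the base group, an abelian normal subgroup with $G/N\cong\Z$. Because $w$ is an iterated identity of the abelian quotient $G/N$, the total exponent $\sum l_i$ of $x_1$ must be $0$ and $v$ must be an identity of $G/N$; then (Remark~\ref{remarkuv}) $v(x_2,\dots,x_n)\in N$ for all choices of the $x_j$. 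Fixing $x_2,\dots,x_n$, the verbal map $\phi_w$ maps $G$ into $N$ after one step, so it suffices to analyze the restriction $\psi$ of $\phi_w$ to $N$. By Lemma~\ref{lemmahomom}, $\phi_u|_N$ is a group homomorphism of the abelian group $N$, so $\psi(x)=\phi_u(x)+v$ is affine; moreover by Lemma~\ref{commutewithaction} $\phi_u|_N$ commutes with the conjugation action of $G/N\cong\Z$ on $N$, i.e. $\phi_u$ is a $\Z[t,t^{-1}]$-module endomorphism of $N\cong\Z[t,t^{-1}]$. Such an endomorphism is multiplication by a fixed Laurent polynomial $p(t)$.

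Now I run the dynamics. That $w$ is an iterated identity of $G$ means that for every starting point $x\in N$ some iterate $\psi_{\circ m}(x)$ vanishes; writing $\psi_{\circ m}(x)=p(t)^m x + (1+p(t)+\dots+p(t)^{m-1})v$, the same module-theoretic rigidity used in the proof of Lemma~\ref{lemmacyclicquotient} forces $p(t)=0$ (otherwise, taking $x$ of large "size" in the module $\Z[t,t^{-1}]$, no iterate can vanish — multiplication by a nonzero Laurent polynomial is injective on $\Z[t,t^{-1}]$, and the inhomogeneous term lies in a fixed finitely generated subgroup, so all iterates are eventually nonzero). Hence $\phi_u|_N=0$ and then the equation $\psi_{\circ m}(x)=v=e$ must already hold, so $\psi\equiv 0$ on $N$; combined with the one step needed to land in $N$, we get $w_{\circ 2}(x_1,\dots,x_n)=e$ for all inputs, i.e. $s(w,G)\le 2$. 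I expect the main obstacle to be exactly this module-theoretic injectivity step: handling the inhomogeneous (affine) part carefully and ruling out $p(t)\ne 0$ over the non-Noetherian-feeling but actually well-behaved ring $\Z[t,t^{-1}]$ — the key facts being that $\Z[t,t^{-1}]$ is a domain (so multiplication by $p\ne0$ is injective) and that the cocycle $(1+p+\dots+p^{m-1})v$ stays in a fixed finitely generated, hence "bounded", submodule, so it cannot cancel the growing term $p^m x$. Once $p=0$ is established everything collapses quickly. Combining with the lower bound gives $s(\Z\wr\Z)=2$.
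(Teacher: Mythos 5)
Your overall plan — reduce modulo the base group $N\cong\Z[t,t^{-1}]$, note that the total $x_1$-exponent must vanish and $v$ must be an identity of $G/N\cong\Z$, view $\phi_u|_N$ as multiplication by a Laurent polynomial $p(t)$, show $p=0$ and then $v=e$, and supply the witness $w=[x_1,[x_1,x_2]]$ for the lower bound — is exactly the paper's proof, just phrased in module-theoretic language rather than in terms of the coefficients $\beta_j$ in the expansion $\phi_u(\epsilon_i)=\sum\beta_j\epsilon_{i+j}$.

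However, the one substantive step — ruling out $p\neq 0$ — is not justified by the argument you give. You write that the inhomogeneous term $(1+p+\cdots+p^{m-1})v$ ``stays in a fixed finitely generated submodule,'' but this is false: already for $p(t)=1$ the sum is $mv$ and for $p(t)=t$ it has unbounded degree, so the cocycle grows with $m$. Injectivity of multiplication by $p\neq 0$ on the domain $\Z[t,t^{-1}]$ is also not by itself enough, since the affine map $x\mapsto p\,x+v$ need not be injective nor have unbounded orbits for every $x$ (e.g.\ $p=-1$, $v\neq 0$ gives a $2$-cycle through $0$). What makes the argument work, and what the paper does, is a leading-coefficient comparison for a well-chosen starting point: letting $m\ge 0$ be the top degree of $p$ and $L$ the degree of $v$, take $x=\epsilon_k=t^k$ with $k>L$. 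Then the coefficient of $t^{k+ml}$ in $\psi_{\circ l}(t^k)=p^l t^k+(1+p+\cdots+p^{l-1})v$ is $\beta_m^l\neq 0$, because $k+ml>m(l-1)+L$ bounds the support of the inhomogeneous part away from that slot; hence no iterate vanishes. You need this careful choice of $x$ (large, on the correct side) and the wlog $m\ge 0$, not a boundedness claim for the cocycle.

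Two minor points on the lower bound. If $w=[x_1,[x_1,x_2]]$ and you put $x_1$ in the base group $N$, then $[x_1,x_2]\in N$ and $N$ is abelian, so $w(x_1,x_2)=e$ already at the first step; to witness $s(w,G)\ge 2$ you must take $x_1$ outside $N$ (the paper uses $x_1=x$, $x_2=xa$ with $x$ the acting generator). Also, the alternative candidate ``$[x_1,x_2]$ iterated'' does not work: $\Z\wr\Z$ is not an Engel group (e.g.\ $x_1=\delta_0$, $x_2=t$ gives iterated commutators $(1-t)^n\delta_0\neq 0$), so $[x_1,x_2]$ is not an iterated identity of $\Z\wr\Z$ at all.
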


\begin{proof}
Let $G= \Z \wr \Z$, $N= \Z^\infty \subset G$. Let $A=G/N$. We know that $A=\Z$.
Let $w(x_1, \dots, x_n)$ be an iterated $E$-type identity of $G$. Since $G$ has elements of infinite order, the total number of $x_i$ in $w$ is equal to zero, for each $i$.
Therefore, $w$ is an identity of $A$.
We consider $u(x_1, x_2, \dots, x_n)$ and $v(x_2, \dots, x_n)$ defined in Remark \ref{remarkuv}. It holds
$w(x_1, \dots, x_n)=u(x_1, x_2 \dots x_n)  v(x_2, x_3, \dots x_n)$.

We want to show that for any $x \in N$ it holds $\phi_u(x)=0$. Let $\epsilon_i$, $i\in \Z$ be the standard basis of $\Z^\Z$.
If  $\phi_u(x) \ne 0$ for some $x \in N$, then there exists $i\in \Z$ such that $\phi(\epsilon_i)\ne 0$.
Let
$$
\phi_u(\epsilon_i) = \sum_{i=-M}^M \beta_{j} \epsilon_{i+j} 
$$

By Lemma \ref{commutewithaction} we know that $\phi_u $ commutes with the action of $\Z$ on $\Z^\Z$ by conjugation. Therefore, for any $k\in \Z$ it holds

$$
\phi_u(\epsilon_k) = \sum_{i=-M}^M \beta_{j} \epsilon_{k+j} 
$$

Let $m$ be the maximal among $i$ such that  $\beta_i \ne 0$ and suppose that $m\ge 0$. 
Let 
$$
v=v(x_2,x_3, \dots, x_n) = \sum_{i=-L}^L \gamma_i \epsilon _i
$$

Take any $k >L$. Observe that the $l$-the iteration of $\phi_u v$, applied to $\epsilon_k$, has a coordinate
$\beta_m^l \ne 0$ at the basis vector $\epsilon_{k+ml}$. Therefore, any such iteration is not equal to zero, and hence
no $E$ type iteration of $w$ at $(x_1, \dots, x_n)$ is equal to $e$ in $G$. We have got a contradiction with the fact that $w$ is an iterated identity of $G$. 

Therefore,  for any $x_2, \dots, x_n$ and any $x\in N$ it holds $\phi_u(x)=0$. Take any $x_1 \in G$. Since $w(x_1, \dots, x_n \in N$, we conclude that for any $l \ge 2$ the 
$l$-the iteration of $w$ at $x_1, \dots, x_n$ is equal to $v(x_2, \dots, x_n)$. Since for some $l$ this iteration is trivial, we see that 
$v(x_2, \dots, x_n)=e$ and hence the second iteration of $w$ is trivial.

We have shown that  $s(\Z\wr Z) \le 2$. 

\begin{remark}
Take $w=(x_1, x_2)=[x_1, [x_1,x_2]]$. 

1)  $w$ is an iterated identity of $\Z\wr \Z$.

2) $w$ is not an identity of $\Z\wr \Z$.

\end {remark}

 Proof of the  remark.
 
1) Note that  $w=(x_1, x_2)=[x_1, [x_1,x_2]]$ belongs to the commutator subgroup of $\Z\wr \Z$ for any $x_1$ and $x_2$.
Observe also that for any $x_1, x_2$ their commutator belongs to the commutator subgroup, and the commutator subgroup of $\Z \wr \Z$ is Abelian.
Therefore, $w(x_1, x_2)=e$ for any $x_2 \in \Z \wr \Z$  $x_1$  and any $x_1$ in the commutators subgroup of $\Z \wr \Z$.
Hence, the second $E$ type iteration of $w$ is trivial.

2)  Let $x, a$ be the standard generators of $\Z \wr \Z$, where $x$ corresponds to the generator of $\Z$ that acts.
Put $x_1=x$, $x_2=x a$. Observe that $[x_1, x_2]$ is not a power of $x$, and this implies that $[x_1,x_2]$ does not commute with $x$.
In other words, $w(x_1,x_2)\ne e$.

qed

The remark implies that $s(\Z\wr \Z) \ge 2$, and hence $s(\Z\wr  \Z) = 2$

\end{proof}

\begin{proposition} \label{propositionzwithfinite}
$s(\Z \wr \Z/R \Z)=k+1$, $k$ being the maximum of $\alpha_i$, where $  R= \prod p_i ^{\alpha_i}$, $p_i$ are prime numbers, $\alpha_i \in \Z_+$.
\end{proposition}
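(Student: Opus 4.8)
\emph{Plan.} The strategy is to reduce, as in the proof of Proposition~\ref{propositionwithz}, to the dynamics of a verbal map on the base group, and then to control the order of nilpotence of the relevant operator. Write $G=\Z\wr\Z/R\Z$ and let $N$ be its base group (the direct sum of copies of $\Z/R\Z$), so that $N\cong(\Z/R\Z)[t,t^{-1}]$ as a module over the group ring $\Z[t,t^{-1}]$ of the acting $\Z$, with $t$ acting by the shift, and $G/N\cong\Z$. Since $G$ has elements of infinite order, any $E$ type iterated identity $w=w(x_1,\dots,x_n)$ of $G$ has zero total exponent in each variable, hence is an identity of the abelian group $G/N$; writing $w=u\cdot v$ as in Remark~\ref{remarkuv}, the word $v=v(x_2,\dots,x_n)$ is then also an identity of $G/N$, so $v(x_2,\dots,x_n)\in N$ for all $x_2,\dots,x_n$, and for fixed $x_2,\dots,x_n$ the restriction of $\phi_u$ to $N$ is multiplication by an element $\bar P\in(\Z/R\Z)[t,t^{-1}]$ of the augmentation ideal (as $\sum_i l_i=0$). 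Thus for $x_1\in N$ one has $w_{\circ m}(x_1,\dots,x_n)=\bar P^{\,m}x_1+(1+\bar P+\dots+\bar P^{\,m-1})\,v$ inside $N$.

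For the upper bound $s(G)\le k+1$ I would first show $\bar P^{\,k}=0$. If the reduction of $\bar P$ modulo some prime $p_i\mid R$ were nonzero in the domain $\mathbb F_{p_i}[t,t^{-1}]$, then, lying in the augmentation ideal, it would not be a monomial, so its powers would have strictly growing top (or bottom) degree; running the leading‑coefficient argument of Proposition~\ref{propositionwithz} over $\mathbb F_{p_i}$ produces an element of $N$ whose forward orbit under the verbal map of $w$ never reaches $e$, contradicting that $w$ is an iterated identity. Hence $\bar P$ reduces to $0$ modulo each $p_i$; in the $p_i$‑primary component of $(\Z/R\Z)[t,t^{-1}]$ this makes $\bar P$ divisible by $p_i$, so $\bar P^{\alpha_i}=0$ there, and therefore $\bar P^{\,k}=0$ since $k=\max_i\alpha_i$. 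Consequently $1+\bar P+\dots+\bar P^{\,k-1}$ is a unit of $(\Z/R\Z)[t,t^{-1}]$, and applying the displayed formula with $x_1=e$ together with the fact that some iterate vanishes forces $v(x_2,\dots,x_n)=e$; hence $w_{\circ k}\equiv e$ on $N$. Finally, since $w$ is an identity of $G/N$ the map $w_{\circ 1}$ sends all of $G$ into $N$, so $w_{\circ (k+1)}\equiv e$ on $G$, i.e. $s(w,G)\le k+1$.

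For the lower bound I would exhibit one iterated identity of depth $k+1$. Put $\mu=\mathrm{rad}(R)=\prod_i p_i$ and take $w(x_1,x_2)=x_1^{\mu}x_2x_1^{-\mu}x_2^{-1}$; being a commutator it satisfies $w(x_1,x_2)\in[G,G]\subseteq N$ for all $x_1,x_2$, and on $N$ its verbal map is multiplication by $\mu\,(1-t^{\,\bar x_2})$, where $\bar x_2\in\Z$ is the image of $x_2$. This operator has $k$‑th power zero (because $\mu^{k}\equiv0\pmod R$), so by the previous paragraph $w$ is an iterated identity with $s(w,G)\le k+1$. To see the bound is attained, let $p$ be the prime with $p^{k}\parallel R$, let $x_2$ generate the acting $\Z$ (so $\bar x_2=1$), and let $x_1$ be the element sitting at ``position $1$'' with a single lamp equal to $1$ at position $0$. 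A direct (telescoping) computation gives $w(x_1,x_2)=1-t^{\mu}\in N$, hence $w_{\circ k}(x_1,x_2)=\bigl(\mu\,(1-t)\bigr)^{k-1}(1-t^{\mu})$ in $(\Z/R\Z)[t,t^{-1}]$. Reducing modulo $p^{k}$ this is nonzero: the factor $\mu^{\,k-1}$ contributes $p^{\,k-1}$ times a unit, while modulo $p$ the Laurent polynomial $(1-t)^{k-1}(1-t^{\mu})$ is nonzero, since $1-t^{\mu}\equiv(1-t^{\mu/p})^{p}$ by the Frobenius and $\mathbb F_{p}[t,t^{-1}]$ is a domain. Thus $w_{\circ k}(x_1,x_2)\ne e$ while $w_{\circ (k+1)}(x_1,x_2)=e$, so $s(w,G)=k+1$ and $s(G)\ge k+1$; combined with the upper bound this gives $s(G)=k+1$.

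I expect the main obstacle to be the nilpotence bound: ruling out iterated identities whose verbal map has non‑nilpotent linear part on $N$ requires adapting the ``escape to infinity'' argument of Proposition~\ref{propositionwithz} to each residue field $\mathbb F_{p_i}$ and matching the growing powers of $\bar P$ against the finite but unbounded contribution of the $v$‑term. The second delicate point is the exactness of the lower bound --- the $p$‑divisibility bookkeeping modulo $p^{k}$ together with the Frobenius identity for $1-t^{\mu}$ is precisely what makes the estimate tight, showing the $k$‑th iterate has not yet died while the $(k+1)$‑st has.
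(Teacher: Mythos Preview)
Your proof is correct and follows essentially the same strategy as the paper's: reduce to the verbal map on the base $N\cong(\Z/R\Z)[t,t^{-1}]$, show via a leading-coefficient escape argument (run modulo each prime $p_i\mid R$) that the linear part $\bar P$ vanishes modulo every $p_i$ and hence $\bar P^{\,k}=0$, deduce $v=0$, and exhibit $w(x_1,x_2)=[x_1^{\mu},x_2]^{\pm1}$ with $\mu=\operatorname{rad}(R)$ for the matching lower bound. Your phrasing in terms of the Laurent polynomial ring, the augmentation ideal, and the unit $1+\bar P+\cdots+\bar P^{\,k-1}$ is a little slicker than the paper's coordinate-by-coordinate bookkeeping (the paper deduces $v=0$ by a direct $p$-adic induction on the minimal power of $p$ dividing all coefficients of $v$, rather than via your unit observation), but the underlying ideas coincide. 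The Frobenius remark in your lower bound is harmless but unnecessary: $(1-t)^{k-1}(1-t^{\mu})\ne 0$ in the domain $\mathbb{F}_p[t,t^{-1}]$ is immediate.
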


\begin{proof}
We have $G= \Z \wr \Z/n\Z$ and we consider $N= (\Z/R \Z)^\infty \subset G$. $A= G/N = \Z$.
Let $w$ be an iterated identity of $G$.
Since $G$ has elements of infinite order, the total number of $x_i$ in $w$ is equal to zero, for each $i$.
Therefore, $w$ is an identity of $A$.
We consider $u(x_1, x_2, \dots, x_n)$ and $v(x_2, \dots, x_n)$ defined in Remark \ref{remarkuv}. It holds
$w(x_1, \dots, x_n)=u(x_1, x_2 \dots x_n)  v(x_2, x_3, \dots x_n)$.

We want to show that for any $x \in N$ it holds $\phi_{u, \circ k}(x)=0$. Let $\epsilon_i$, $i\in \Z$ be the standard basis of $(\Z/ R \Z)^\Z$.

Suppose that  $\phi_{u, \circ k}(x) \ne 0$ for some $x \in N$. Since $\phi_u$, and hence also all its iterations $\phi_{u, \circ k}$ are homomorphism of the Abelian group
$N$,  exists $i\in \Z$ such that $\phi_{u, \circ k}(\epsilon_i)\ne 0$.
Let
$$
\phi_u(\epsilon_i) = \sum_{i=-M}^M \beta_{j} \epsilon_{i+j} 
$$

By Lemma \ref{commutewithaction} we know that $\phi_u $ commutes with the action of $\Z$ on $(\Z/R \Z)^\Z$ by conjugation. Therefore, for any $r\in \Z$ it holds

$$
\phi_u(\epsilon_r) = \sum_{i=-M}^M \beta_{j} \epsilon_{r+j} 
$$

Observe that if all $b_i$ are divided by any of $p_j$ among the prime divisors of $R$, then all coefficients
 of $\phi_{u, \circ k}(\epsilon_i)$ are divided by
$p_j^k$, for all $j$. In this case all such coefficients are divided by $R$, and thus $\phi_{u, \circ k}(\epsilon_i)=0$ in $(\Z/R\Z)^\Z$.

Therefore, we can assume that there exists some $p$ among prime divisors of $R$ and some integer $j$ such that $\beta_j$ is not divided by $p$.
Let $m$ be the maximal  among $j$ such that  $\beta_j$  is not divided by $p$.
Without loss of generality we can assume 
that $m\ge 0$.

Let 
$$
v=v(x_2,x_3, \dots, x_n) = \sum_{i=-L}^L \gamma_i \epsilon _i
$$

Take any $T >L$. Observe that the $l$-the iteration of $\phi_u v$, applied to $\epsilon_M$, has a coordinate
$\beta_m^l \ne 0$ at the basis vector $\epsilon_{T+ml}$. Therefore, any such iteration is not equal to zero, and hence
no $E$ type iteration of $w$ at $(x_1, \dots, x_n)$ is equal to $e$ in $G$. We have got a contradiction with the fact that $w$ is an iterated identtity of $G$.

Now let us show that $\gamma_i$ is divided by $R$, for any $i$.  Indeed, otherwise there exist a prime divisor $p$ of $R$ such that $R$ is divided by $p^s$, and
there exists $i$ such that $\gamma_i$ is not divided by $p^s$. Let $t$ be the maximal number such that $\gamma_i$ is divided by $p^t$ for all $i$. We know that $t<s$.
Observe that all coefficients of $\phi_{u},  (v)$ are divisible by $p^{t+1}$, and hence   $\phi_{w},  (v) \equiv v$ modulo $p^{t+1}   (\Z/R \Z)^\Z$. This implies that for all $l\ge 1$
$\phi_{w, \circ l},  (v) \equiv v$ modulo $p^{t+1}   \Z/R \Z$. Therefore, $ \phi_{w, \circ l} \ne e$ for all $l\ge 1$. This  contradiction shows that 
$\gamma_i$ is divided by $R$, for any $i$. In other words, $v\equiv 0$ in $(\Z/R \Z)^\Z$.

We have shown therefore that for any $x_2$, $x_3$, \dots, $x_n \in G$ and any $x \in N$ it holds $\phi_{w, \circ k} (x)= w_{\circ k}(x, x_2, \dots, x_n) =e$
This implies that  $s(\Z\wr \Z/R \Z) \le k+1$.

\begin{remark}
Take $w(x_1, x_2)= [x_1^m,x_2]$, where $m=p_1 \dots p_l$, $p_i$ are prime numbers.
Consider $G=\Z \wr \Z/ R\Z$

1)  If  prime divisors of $R$ are contained among  $p_1, \dots, p_l$, then  $w(x_1, x_2)$ is an iterated identity of $\Z\wr \Z/R \Z$

2) If    $n$ is divided by $p_1^k$, then $w_{\circ k}(x_1,x_2)$ is not an identity of $\Z\wr \Z/ R \Z$.

\end {remark}

Proof of the remark.

1)  Observe that $w(x_1, x_2) \in (\Z / R \Z)^\infty$ for any    $x_1, x_2 \in G$
Observe also that for any $x_2 \in G$ and any $k\ge 0$ and $x_1 \in m^k  (\Z / R \Z)^\infty$ it holds
$w(x_1, x_2) \in m^{k+1}  (\Z / n \Z)^\infty$. By the assumption of the remark we know that there exist $k$ such that $m^k$ is divided by $R$. Hence
for any $x_1, x_2 \in G$ the $k+1$-th Engel type iteration of $w$, evaluated at $x_1,x_2$ is trivial.

2)   Let $x, a$ be the standard generators of $\Z \wr \Z R\Z$, where $x$ corresponds to the generator of $\Z$.
Put $x_1=x a$, $x_2=x $. Observe that $[x_1^m,x_2]$ is a configuration of the wreath product that has two non-zero values: $-1$ at $m+1$ and $+1$ at $1$.
Applying $w$ to  $[x_1^m,x_2]$ and $x_2$ we get a configuration, with the value at the maximal point of the support equal to $m$. The $k$-the iteration, applied
to $x_1$ and $x_2$, has value $m^{k-1}$ at one of the points of its support. Since $R$ is divided by $p_1^k$, we know that $m^{k-1} \ne 0$ in $\Z/R\Z$.
We conclude that $k$-iteration of  $w$ is not an identity of $\Z\wr \Z/ R\Z$.

qed

The remark implies that $s(\Z\wr \Z/R\Z) \ge k+1$ for $R$, $k$ satisfying  the assumption of the theorem).

\end{proof}

\subsection{Central extensions}

\begin{remark} \label{kernelnormalsubgroup}

 Let $H$ be a normal subgroup of $G$, $w(x_1, \dots, x_n)$ be some word and denote by  $\phi_d$ the verbal map $\phi_d(x)= w_{\circ d}(x, x_2, \dots, x_n)$.

i) For any $x_2, \dots, x_n$  there exists $D$ (an integer number or $\infty$) such that the following holds. For any $x \in H$  the image $\phi_l(x) \in N$ if and only if $l$ is divided by $D$. 

ii) If we assume additionally that $w$ is an iterated identity of  $G/H$, then $D \le s(w, G/H)+1$.

\end{remark}

\begin{proof}

i) Apply Remark \ref{remarkuv} to $w_{\circ d}$. We know that there exist words $u_d$ and $v_d$ such that $\phi_d (x)= \phi_{u_d}(x, x_2, \dots, x_n) v_d(x_2, \dots, x_n)$.
Observe that  $ \phi_{u_d}(x, x_2, \dots, x_n)$ is a product of conjugates of powers of $x$. Therefore  $\phi_{u_d}(x, x_2, \dots, x_n)$ belongs to $H$ whenever
$x\in H$, since $H$ is a normal subgroup.
If $v_d(x_2, \dots, x_n)$ belongs to $H$ then $\phi_d (x) \in H$ for any $x \in H$.
If $v_d(x_2, \dots, x_n)$  does not belong to $H$ then $\phi_d (x) \notin H$ for any $x \in H$.

Now observe that  if $D$ is such that $\phi_{\circ d}(x) \in N$ for any $x\in N$, then  $\phi_{\circ kD}$ also has property. Observe also that if $a>b$ and $\phi_{\circ a}$ and
$\phi_{circ b}$ both have this property, then $\phi_{\circ (a-b)}$ also has this property.

ii) If $\phi_1 (x)  \in H$ for some $x\in H$, then $D$ (defined in i) ) is equal to $1$, and thus $1=D \le s(w, G/H)$.
Otherwise, $\phi_1(e) \notin H$. In this case there exists $l \le s(w, G/H)$ such that $\phi_{l+1} = \phi_l (\phi(e)) \in H$, and thus
$D \le s(w, G/H)+1$.

\end{proof}

\begin{lemma}  \label{lemmacentral}
 Assume that $H$ is a central subgroup of $G$. For any iterated identity $w$  of $G$ such that the total number of each $x_i$ in $w$ is equal to zero
$$
s(w,G) \ge 2 s (w, G/H) +1.
$$
In particular, if $G$ contains at least one element of infinite order and 
if $G/H$ is bounded, then $G$ is bounded.

\end{lemma}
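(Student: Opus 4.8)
The plan is to prove the inequality $s(w,G)\le 2\,s(w,G/H)+1$ — the displayed ``$\ge$'' must be a typo (the case $H=\{e\}$ already refutes it), and it is precisely the bound ``$\le$'' that feeds the ``in particular'' clause. Set $D:=s(w,G/H)$, which is finite since an iterated identity of $G$ is one of $G/H$. Using the \emph{unconditional} decomposition of Remark \ref{remarkuv}, I would write $w(x_1,\dots,x_n)=u(x_1,\dots,x_n)\,v(x_2,\dots,x_n)$ with $u=\prod_i\alpha_i x_1^{l_i}\alpha_i^{-1}$, so that $\sum_i l_i$ is the total number of $x_1$ in $w$, hence $\sum_i l_i=0$ by hypothesis. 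The one structural observation driving the whole argument is: if $y\in H$, then, $H$ being central, $\alpha_i y^{l_i}\alpha_i^{-1}=y^{l_i}$ for every substitution of $x_2,\dots,x_n$, so $u(y,x_2,\dots,x_n)=y^{\sum_i l_i}=e$ and therefore $w(y,x_2,\dots,x_n)=v(x_2,\dots,x_n)$; in particular also $w(e,x_2,\dots,x_n)=v(x_2,\dots,x_n)$.

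Next I would fix $x_2,\dots,x_n\in G$, put $\psi(z):=w(z,x_2,\dots,x_n)$ and $v_0:=v(x_2,\dots,x_n)=\psi(e)$. The observation says that the instant some iterate of $w$ at $(x_1,\dots,x_n)$ falls into $H$, the next iterate becomes the constant $v_0$; and since $v_0=\psi(e)$, from that moment on the orbit of $(x_1,\dots,x_n)$ agrees index-for-index with that of $(e,x_2,\dots,x_n)$. Precisely: because $w$ is an iterated identity of $G/H$ there is $d_1\le D$ with $w_{\circ d_1}(x_1,\dots,x_n)\in H$, and then $w_{\circ(d_1+1+j)}(x_1,\dots,x_n)=\psi_{\circ j}(v_0)=w_{\circ(1+j)}(e,x_2,\dots,x_n)$ for all $j\ge 0$. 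So it suffices to show that the orbit of $(e,x_2,\dots,x_n)$ reaches $e$ within $D+1$ steps.

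For that tuple I would apply the iterated identity of $G/H$ to $(v_0,x_2,\dots,x_n)$: there is $c\le D$ with $\psi_{\circ c}(v_0)=w_{\circ c}(v_0,x_2,\dots,x_n)\in H$, hence $w_{\circ(c+1)}(e,x_2,\dots,x_n)\in H$ with $1\le c+1\le D+1$. Since this iterate lies in the central subgroup $H$, the observation gives $w_{\circ(c+2)}(e,x_2,\dots,x_n)=v_0=w_{\circ 1}(e,x_2,\dots,x_n)$, so the sequence $\big(w_{\circ k}(e,x_2,\dots,x_n)\big)_{k\ge 1}$ is periodic with period $c+1\le D+1$. As $w$ is an iterated identity of $G$ there is $m\ge 1$ with $w_{\circ m}(e,x_2,\dots,x_n)=e$, and by periodicity some $m'$ with $1\le m'\le c+1\le D+1$ also works. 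Taking $j=m'-1$ in the previous paragraph yields $w_{\circ(d_1+m')}(x_1,\dots,x_n)=w_{\circ m'}(e,x_2,\dots,x_n)=e$ with $d_1+m'\le D+(D+1)=2D+1$. Since $(x_1,\dots,x_n)$ was arbitrary, $s(w,G)\le 2\,s(w,G/H)+1$.

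For the ``in particular'' clause: if $G$ has an element of infinite order then, by the standard substitution argument (cf.\ the proof of Example \ref{exampleabelian}), every iterated identity of $G$ has total exponent $0$ in each variable, so the bound just proved applies to every non-trivial iterated identity and $s(G)\le 2\,s(G/H)+1<\infty$ because $G/H$ is bounded. Moreover $G/H$ bounded means (Remark \ref{boundedandidentities}) it satisfies a non-trivial identity $w'(x_1,\dots,x_k)$; since $H$ is central and $w'(x_1,\dots,x_k)\in H$ for all substitutions in $G$, the word $[w'(x_1,\dots,x_k),x_{k+1}]$ is a non-trivial identity of $G$, so $G$ is bounded by Remark \ref{boundedandidentities} again. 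The only delicate point is the index bookkeeping in the third paragraph — the two separate ``$+1$''s (one from re-entering $H$ and passing through the constant $v_0$, one from the periodicity) and the transfer back to the general tuple; everything else reduces to the one-line computation $u(y,\dots)=y^{\sum_i l_i}=e$ for central $y$, together with the decomposition of Remark \ref{remarkuv}.
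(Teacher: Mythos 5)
Your proof is correct, and it establishes the inequality in the form $s(w,G)\le 2\,s(w,G/H)+1$, which is indeed what the paper means (the paper's own proof ends by deducing exactly this upper bound from ``$l\le d+D\le 2d+1$'', so the ``$\ge$'' in the displayed statement is a typo, as you note). You also correctly supply the missing hypothesis-bookkeeping in the ``in particular'' clause: the existence of an element of infinite order is what forces every iterated identity of $G$ to have total exponent zero in each variable, so the bound applies uniformly.

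The route is essentially the paper's, with somewhat cleaner bookkeeping. Both arguments rest on the single computation that, for $y\in H$ central and total exponent of $x_1$ in $w$ equal to zero, $u(y,x_2,\dots,x_n)=y^{\sum l_i}=e$, so that the verbal map $\psi=\phi_{w,x_2,\dots,x_n}$ is \emph{constant} on $H$ with value $v_0=\psi(e)$. The paper packages the consequence through Remark \ref{kernelnormalsubgroup}: it locates a ``return period'' $D\le s(w,G/H)+1$ for the orbit hitting $H$, applies Remark \ref{remarkuv} to $w_{\circ D}$, and then shows the resulting constant is $e$ by a divisibility argument. You instead apply Remark \ref{remarkuv} directly to $w$ and track the orbit of $e$: once any orbit enters $H$, it merges (one step later) with the orbit of $e$, and that orbit is periodic of period $c+1\le D+1$ because it returns to $H$ and hence to $v_0$. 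This sidesteps Remark \ref{kernelnormalsubgroup} entirely and keeps the same bound. Your choice of $[w'(x_1,\dots,x_k),x_{k+1}]$ as a nontrivial identity of $G$ in the ``in particular'' part is also a slightly more direct variant of the commutator identity the paper uses there, and it is correct for the same reason ($w'$ takes values in the central $H$). The only thing I would flag, purely as a matter of exposition, is that you should state explicitly that $c\ge 1$ (which follows from the definition of iterational depth), so that $m'\ge 1$ and the index $d_1+m'$ is genuinely a positive iteration count.
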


\begin{proof}

Let $d = s(w, G/H)$. Fix some $x_2$, \dots $x_n \in G$.

Let $\phi_d$ is a verbal map $\phi_d(x)= w_{\circ d}(x, x_2, \dots, x_n)$.

By Remark \ref{kernelnormalsubgroup} we know that there exists $D \le d+1$ such that for any $x \in N$  the image $\phi_l(x) \in N$ if and only if $l$ is divided by $D$. 

Apply Remark \ref{remarkuv}  to $w_{\circ D}$ and write $\phi_D(x)= u(x, \dots, x_n) v (x_2, \dots, x_n)$.
Since the total number of entries of  $x_1$ in $w$ is equal to zero and since $H$ is central, we see that  $u(x, \dots, x_n) =e$.
Let us show that $v(x_2, \dots, x_n)=e$. Indeed, otherwise $w_{\circ (lD)}(e, x_2, \dots, x_n) = v(x_2, \dots, x_n) \ne r$ for any positive integer $l$.
This implies that  $w_{\circ m}(e, x_2, \dots, x_n) \ne e$ for any positive integer $m$, and this is a contradiction with the fact that $w$ is an iterated identity of $G$.

We have shown therefore that $v(x_2, \dots, x_n)=e$, and thus $w_{\circ D} (x, x_2, \dots, x_n)=e$ for any $x \in H$.

This implies that for any $x_2, x_3, \dots, x_n$ and any $x\in G$ there exists $l \le d+D \le 2d+1$ such that
$w{\circ l}(x, x_2, \dots, x_n) = e$, and hence $s(w,G) \ge 2 s (w, G/H) +1$.

Finally observe that if $G/H$ satisfies some non-trivial identity, so does $G$: if $G$ satisfies an $E$ type iterated identity $w(x_1, \dots, x_n)$, then
$G/H$ satisfies the commutator identity $w'(x_1, \dots, x_{2n})= [w(x_1, \dots, x_n), w(x_{n+1}, \dots, x_{2n})]$.
\end{proof}

\begin{example}
Let $G$ be a solvable group such that the center $C(G) =  \sum_{i=1}^\infty \Z/p^i \Z$ and such that  $G/C(G) = \Z \wr \Z$.
(Such groups do exist, see Section 3 in \cite{hall1954}).
Then $G$ is bounded,, but $C(G)$ is not bounded.
\end{example}

\begin{proof}
From Proposition \ref{propositionwithz} we know that $\Z \wr \Z$ is bounded.  Since $G$ is a central extension of $\Z \wr \Z$, we can use
 Lemma \ref{lemmacentral} to conclude that $G$ is bounded.

\end{proof}

\section{Metabelian groups} \label{sectionmetabelian}

\begin{theorem} \label{theoremmetabelian}
   If $G$ is a finitely generated metabelian group,
 then $s(G)$ if finite.
\end{theorem}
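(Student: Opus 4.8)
The plan is to reduce the statement, via the structural results already established in Section \ref{sectionnormalsubgroups}, to understanding the dynamics of the verbal map on an abelian normal subgroup of a finitely generated metabelian group, which by the Hall theory is a finitely generated module over a Noetherian ring. Concretely, let $G$ be finitely generated metabelian with abelian normal subgroup $N$ such that $A = G/N$ is abelian and finitely generated. Let $w(x_1,\dots,x_n)$ be a non-trivial iterated identity of $G$; as in the Example \ref{exampleabelian} argument, if $G$ has an element of infinite order in every relevant quotient then the exponent sums of each $x_i$ in $w$ vanish, and in general we track the exponent sum $r$ of $x_1$ as in Remark \ref{remarkuvnilpotent}. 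Since $w$ restricted to $A$ is an identity of the finitely generated abelian group $A$, the quotient depth $s(w,A)$ is finite (bounded by $|A_{\mathrm{tors}}|+1$, say). Fixing $x_2,\dots,x_n \in G$, by Remark \ref{kernelnormalsubgroup} there is a $D \le s(w,A)+1$ controlling when iterates of the verbal map land back in $N$, and by Lemma \ref{lemmahomom} and Lemma \ref{commutewithaction} the relevant map $\phi_u$ on $N$ is a $\mathbb{Z}[A]$-module endomorphism: it commutes with the conjugation action of $A$ and is additive. So $\phi_{w_{\circ D}}$ restricted to $N$ has the affine form $x \mapsto Tx + v$ where $T \in \operatorname{End}_{\mathbb{Z}[A]}(N)$ and $v \in N$ is fixed.

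The heart of the argument is then: \emph{if $w$ is an iterated identity of $G$, the $\mathbb{Z}[A]$-linear part $T$ must be nilpotent on $N$ with a uniformly bounded nilpotency index, and $v$ must be forced to zero.} Here is where the finite generation is essential. By Hall's theorem, $\mathbb{Z}[A]$ is a Noetherian ring and $N$ is a finitely generated $\mathbb{Z}[A]$-module. The condition that $w$ is an iterated identity says: for every $x\in N$ there is $\ell$ with $T^\ell x + (1 + T + \dots + T^{\ell-1})v = 0$. Taking $x = 0$ shows $v$ lies in the union over $\ell$ of the kernels of $1+T+\dots+T^{\ell-1}$; taking the submodule $N' = \mathbb{Z}[A]\cdot v$ and chasing the relation, I expect to deduce (using that $N'$ is finitely generated, so finitely many generators each killed by some iterate, and then a common iterate) first that $v$ is killed by a single $1+T+\dots+T^{\ell_0-1}$, and then — by applying the same reasoning to $T^{\ell_0}v$ and iterating, or by a direct Noetherian/Fitting-lemma argument on the submodule $\sum_k T^k N$ — that on the quotient by the "eventually vanishing" part $T$ acts invertibly, whence that part must be zero. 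The analogue of the "$T \ne 0$ gives arbitrarily large nonzero values, contradiction" step in the proofs of Propositions \ref{propositionwithz} and \ref{propositionzwithfinite} is, in this generality, the statement that a f.g. $\mathbb{Z}[A]$-module on which a module endomorphism $T$ has the property "every element is eventually killed by the affine iteration" must actually have $T$ topologically nilpotent with bounded index; I would prove this by Fitting's lemma for the Noetherian module $N$: $N = \ker(T^\infty) \oplus \operatorname{im}(T^\infty)$ (stabilizing by the ascending/descending chain conditions), $T$ is nilpotent on the first summand with index at most the stabilization length $k_0$, and bijective on the second, and the iterated-identity hypothesis forces the second summand to be $0$ and $v=0$.

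Granting this, for the fixed but arbitrary $x_2,\dots,x_n$ we get that $w_{\circ D}$ followed by $k_0$ more iterations of $\phi_u$ is trivial on $N$; combined with $D \le s(w,A)+1$ this bounds $s(w,G)$ by a function of $s(w,A)$ and $k_0$. The remaining point is uniformity in $(x_2,\dots,x_n)$: a priori $T$, $v$, $D$ and $k_0$ depend on the chosen $x_2,\dots,x_n$. For $D$ and $s(w,A)$ this is already uniform. For $k_0$: the module $N$ is fixed, and the possible endomorphisms $T$ arising are $\phi_{u_{w_{\circ D}}, x_2,\dots,x_n}$; I would argue that the nilpotency index on $\ker(T^\infty)$ is bounded by the length of $N$ as a $\mathbb{Z}[A]$-module composition-type invariant — but $\mathbb{Z}[A]$-modules needn't have finite length, so instead I'd bound it by the number of generators of $N$ together with Krull dimension of $\mathbb{Z}[A]$, or more robustly, observe that across all such $T$ the iterated-identity hypothesis holds simultaneously and run a compactness/Noetherian-induction argument on $G$ directly. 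Finally, by Remark \ref{boundedandidentities}, since a finitely generated metabelian group satisfies a non-trivial identity precisely when... — actually one must be slightly careful, as not every f.g. metabelian group satisfies a law; but here we only need: \emph{if} $s(G) < \infty$ and $G$ has a non-trivial iterated identity then done, and if $G$ has no non-trivial iterated identity then $s(G) = -\infty$ is finite, so in either case $s(G)$ is finite, which is exactly what the theorem claims.

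\textbf{Main obstacle.} The crux is the uniform bound on the nilpotency index $k_0$ over all the affine maps $x \mapsto Tx+v$ that can occur as $x_2,\dots,x_n$ vary — i.e. turning the pointwise "eventually zero under iteration" hypothesis into a single bound depending only on $G$ and $w$. The Noetherian property of $\mathbb{Z}[A]$ and finite generation of $N$ give the qualitative Fitting decomposition for each fixed tuple; extracting a bound independent of the tuple is the genuinely new input and is where I would expect to spend most of the work, likely via an induction on the Krull dimension of $\mathbb{Z}[A]$ or on a subnormal series of $N$, mirroring but substantially generalizing the explicit "top of the support" computations used for the wreath-product examples.
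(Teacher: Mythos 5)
Your reduction is the same as the paper's: write $w=uv$ with $u$ a product of conjugates of powers of $x_1$ (Remark \ref{remarkuv}), observe via Lemmas \ref{lemmahomom} and \ref{commutewithaction} that $\phi_u\colon N\to N$ is a $\mathbb{Z}[A]$-module endomorphism, appeal to Hall's finiteness theory so that $N$ (and its torsion subgroup $T$) are finitely generated $\mathbb{Z}[A]$-modules, and then try to control the affine verbal dynamics $x\mapsto\phi_u(x)+v$ on $N$. You also correctly identify that the sole remaining issue is uniformity of the depth bound over all tuples $(x_2,\dots,x_n)$. All of that matches the paper.

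The gap is in the proposed mechanism. You want Fitting's lemma to give $N=\ker(\phi_u^\infty)\oplus\operatorname{im}(\phi_u^\infty)$ with $\phi_u$ nilpotent on the first factor and bijective on the second. But Fitting requires \emph{both} chain conditions, and $\mathbb{Z}[A]$-modules for $A\cong\mathbb{Z}^d\times A_f$ are Noetherian, not Artinian: the descending chain $\operatorname{im}(\phi_u)\supseteq\operatorname{im}(\phi_u^{\circ 2})\supseteq\cdots$ need not stabilize. Already for $A=\mathbb{Z}$, $N=\mathbb{Z}[t,t^{-1}]$ and $\phi_u=$ multiplication by $(t-1)$ one has a strictly decreasing image chain with trivial kernel and trivial stable image — no Fitting decomposition. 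So the claim that $\phi_u$ is nilpotent with bounded index on a complement ``by Fitting'' is not available, and your proposed bound on the nilpotency index (``number of generators of $N$ together with Krull dimension of $\mathbb{Z}[A]$'') is not a valid bound even qualitatively. What the paper does instead is quite different in detail: Lemma \ref{lemmaalongcoordinates} shows one can choose $x_2,\dots,x_n$ along a single coordinate direction $\epsilon_k$ of $\mathbb{Z}^d$ and thereby make the shifts $M_j$ distinct; the iterated-identity hypothesis then produces (part (ii)) a nontrivial relation $\sum_j l_j\, s^{M_j\epsilon_k}=0$ with at least two nonzero coefficients in each coordinate direction, and Lemma \ref{lemmacubeisfinite} converts such relations into finite-dimensionality of the $\mathbb{Q}$-span $V_s$ of the $\mathbb{Z}^d$-orbit of each generator $s$. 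On the components where $V_s$ is infinite-dimensional the map is forced to vanish identically; on the others Lemma \ref{lemmafinitetensor} bounds the iteration depth by $\dim_{\mathbb Q}V_s$, which is an invariant of $(G,N,S)$ alone and hence uniform in $(x_2,\dots,x_n)$ and in $w$. The torsion part is handled by a parallel argument reducing to a finite subgroup $T_f$ via Lemma \ref{lemmadimfinitefinite}. You gesture at exactly this (``top of the support'' along a coordinate), but it is the entire technical content of the proof and is left unexecuted; as written the proposal does not close the main obstacle it names.

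One further small point: you write ``not every f.g.\ metabelian group satisfies a law,'' which is false — every metabelian group satisfies $[[x_1,x_2],[x_3,x_4]]=e$ — so the fallback discussion of $s(G)=-\infty$ is both unnecessary and, under the paper's conventions ($s(G)$ finite means $1\le s(G)<\infty$), would not actually give the stated conclusion.
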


\begin{proof}

If $G$ is finite, we know that $G$ is bounded (see Example \ref{examplefinite}). We will therefore assume that $G$ is infinite.

Let $N$ be an Abelian normal subgroup of  $G$, such that $A=G/N$ is Abelian.

Let $w$ be an iterated $E$-type identity of $G$. Since $G$ is infinite finitely generated metabelian group, it contains an element of infinite order. Therefore, for each $i$ the total number of $x_i$ in $w$ is equal to zero. In particular, $w$ is an identity of $A$.  
Using Remark \ref{remarkuv} we can write 
$w(x_1, \dots, x_n)=u(x_1, \dots, x_n)  v(x_2, \dots, x_n)$.

Fix $x_2, \dots, x_n$ and consider the homomorphism $\phi_u : N  \to N$.
(By Lemma \ref{lemmahomom} we know that $\phi_u$ is indeed a homomorphism). We also consider $\phi_w(x)= \phi_u(x)v(x_2, \dots, x_n)= w(x, x_2, \dots, x_n)$.

 Since $A$ is a finitely generated Abelian group,
$A=\Z^d +A_f$, $d\ge 0$, $A_f$ is a finite Abelian group.

Given a map $\rho$ from $N$ to $N$ let us say that $\rho$ is  recurrent if for any $x\in N$ there exists $n$ such that the $n$-th iteration $\rho_{\circ n}(x)=e$.
Since $w$ is an iterated identity, we know that the map $\phi_w: N \to N$ is recurrent. Our goal is to find a common upper bound for (minimal) $n$ such that
$\phi_{w, \circ n}(x)=e$, which does not depend on $x_2, \dots, x_n$ and does not depend on $w$. To do this, we need to study dynamics of the homomorphism $\phi_u$ and 
of the "affine map" $\phi_w$ on $N$.

\begin{lemma} \label{lemmafinitetensor}
Let $F$ be a torsion-free Abelian group such that its tensor product $F \otimes \mathbb{Q}$ has finite dimension, which we denote by $D$.
 $\xi: F \to F$ be a homomorphism, $a$ is some element of $F$  and
$\rho(x) = \xi(x)+a$. 
Suppose that for any $x\in F$ there exists $n$ such that $\rho_{\circ n} (x)=e$. Then for any $x\in F$  it holds
 $\rho_{\circ D} (x)= \xi_{\circ D}(x)=e$.

\end{lemma}

\begin{proof}
Since $F$ is torsion free, $F$ is imbedded in  $V=F \otimes \mathbb{Q}$.

 It is clear that $\xi$ extends to a linear map from $V$ to $V$, and we use the same letter $\xi$ to denote this
linear map.
First let us show that the kernel of $\xi_{\circ D}$ is equal to $V$. 
Let $V_i = \ker \xi_{\circ i}$.
It is clear that $V_1 \subset V_2 \subset V_3 \dots$.
It is also clear that if for some $j$ it holds $V_j=V_{j+1}$, then for any $k\ge j$ it holds $V_k=V_j$.
Suppose that the kernel of $\xi_{\circ d}$ is not equal to $V$. Then there exists a linear subspace $U$ of dimension $\le D-1$ such that for any $i\ge D-1$ it holds
$V_i=U$.

Now observe that there exits $M$ such that $\phi_{\circ M}(0)=\phi_{\circ (M-1)} (\phi (0))=0$. Note that
$\rho$ (as well as its iterations) extends to an affine map from $V$ to $V$, denote by
$W_i$ the kernel of $\phi_{\circ i}$ in $V$.

Observe that that $W_i$ is contained  in the union of the image of $U$ by a finite number of translations.
In other words, the union $W = \cup W_i$ is contained in $\cup_{i=1}^l (U+ v_i)$, for some $l\ge 0$ and some $v_i \in V$.
Note that since the linear span of the image of $F$ is equal to $V \ne U$, there exists a $f \in F$ such that its image does not belong to $U$.
Replacing, if necessary, $f$ by some of its power $Lf$, we conclude that there exists $f \in F$ such that its image in the tensor product
 does not belong to $W$.
This is a contradiction with the assumption of the lemma.

Now we know that  the kernel of $\xi_{\circ D}$ is equal to $V$: $V_D = V_{D+1}= V_{D+2}= \dots =0$ and $\xi_{\circ D}(x)=0$ for any $x\in F$.

Observe that $\phi_{\circ j}(0) =  a + \xi(a)+ \xi_{\circ 2}(a) + \dots + \xi_{\circ (j-1)}(a)$. This shows that
for any $j\ge D$ it holds $\phi_{\circ j}(0) = \phi_{\circ d}(0)$. Since $\phi_{\circ M}(0) =0$ for some $M\ge 1$,  and thus we conclude that 
$\phi_{\circ j}(0)=0$ for any $j\ge D$.

This implies that $\phi_{\circ j}(x)= \xi_{\circ j}(x)$ for any $j\ge D$ and any $x\in F$, and completes the proof of the lemma.

\end{proof}

In Lemma \ref{lemmaalongcoordinates} below we use additive notation for $N$ and $G/N$.

\begin{lemma}  \label{lemmaalongcoordinates}

 Let $G/N$ and $N$ be Abelian groups.  Let $w(x_1, \dots, x_n)$ be an identity of $G/N$ and $u$ and $v$ be as in Remark \ref{remarkuv},
Assume that $G/N= \mathbb{Z}^d+A'$, $A'$ is finite.
Denote by $\epsilon_1, \dots, \epsilon_d$ the standard generators of $\Z^d$.

i) There exists $l_j$ ( $j\in J$, $J$ is a finite set), and $r_j=(r_{2, j}, \dots, r_{d})$ 
 such that  
for any $x_2, \dots, x_n \in G$ and $x \in N$ 
$$
\phi_ u(x)= \sum_{j\in J, z= ({r_{2,j} x_2},  {r_{3,j} x_3}, \dots, {r_{n,j} x_n}        )} l_j z(s)
$$

and such that for any $k: 1 \le k \le d$ there exists $x_2, \dots, x_n \in G$ such that for any $x\in N$
$$
\phi_ u(x)= \sum_{j\in J} l_j x^{M_j \epsilon_ k},
$$  
where $M_j$ are integers such that $M_j \ne M_{j'}$ for $j, j' \in J$ such that $j\ne j'$.

In particular, if  there exists $x_2, \dots, x_n$ such that the homomorphism $\phi_u$ from $N$ to $N$ is not identically zero, then there exists $x_2, \dots, x_n$ such that
$\phi_u$ has the form as above and at least one of coefficients  $l_j$ is non-zero.

ii) Assume moreover that $w$ is an iterated identity of $G$ and that $N$ contains at least one non-identity element. If there exists $x_2, \dots, x_n$ such that the homomorphism $\phi_u$ from $N$ to $N$ is not identically zero, then there exists $x_2, \dots, x_n$ such that
$\phi_u$ has the form as above and at least two of coefficients  $l_j$ is non-zero.

\end{lemma}

\begin{proof}

We know that 
$w(x_1, \dots, x_n)= u(x_1, \dots, x_n) v(x_2, \dots, x_n)$, $u$ and $v$ are identities of $G/N$. Fix some $x_2$, \dots, $x_n$ in $G$. Put $\psi(x)=
\phi_u(x)= u(x, x_2, \dots , x_n)$.
For any $ x\in G$
$$
u(x, x_2, \dots, x_n) = \prod \alpha_i(x_2, \dots, x_n) x^{l_i} \alpha_i(x_2, \dots, x_n) 
$$

Observe that for any $x\in N$ $\alpha_i(x_2, \dots, x_n) x^{l_i} \alpha_i(x_2, \dots, x_n) $ depends only on the image of $\alpha_i$ in $G/N$.
Regrouping, if necessary, the terms with the same image of $\alpha_i$ in $G/N$, we conclude that for any $x_2, \dots, x_n \in G$ and $x\in X$
$\phi_u$ has the following form, in the additive notation on $N$ and $G/N$:

$$
\phi_ u(x)= \sum_{j\in J, z= ({r_{2,j} x_2},  {r_{3,j} x_3}, \dots, {r_{n,j} x_n}        )} l_j (s)^z
$$

Now we fix $k: 1 \le k \le d$ and consider $x_2, \dots, x_n \in G/N$ (or to be more precise, consider $x_2, \dots, x_n$ in $G$ with corresponding image  in $G/N$
$x_2 =L_2 \epsilon_k$, $x_3 =L_2 \epsilon_k$,  $x_n =L_n \epsilon_k$.
Consider the lexicographic order on $\Z^{n-1}$.
Observe that if  positive integers $L_i$ are such that $L_2$ is much larger than $L_3$, $L_3$ is much larger than $L_4$, \dots( $ L_2>> L_3 >> L_4 >> \dots >> L_n$) and if
$r= (r_2, \dots, r_n)> \bar{r}=  (\bar{r}_2, \dots,\bar{r}_n) $ in the lexicographic order on $\Z^{n-1}$, then 
$$
L_2 r_2 + L_3 r_3 + \dots + L_n r_n >  L_2 \bar{r}_2 + L_3 \bar{r}_3 + \dots + L_n \bar{r}_n
$$
Thus, choosing a quickly decreasing sequence of positive integers $L_2, \dots, L_n$, we can assure that
all $M_j =L_2 r_{2,j} + L_3 r_{3,j} + \dots + L_n r_{n, j}$, $j$ belonging to the finite set $J$, are distinct.

This completes the proof of i).

Two prove ii)  take some $x\ne e, x \in N$.
Observe that if $\phi_ u(x)= \sum_{j\in J} l_j x^{M_j \epsilon_ k}$ for some choice of $x_2, \dots, x_n$ and all $x$ and exactly one among coefficients $l_j$ is non-zero, then any iteration of  $\phi_ u$ has the same
property, which contradicts the fact that some iteration of $\phi_ u(x)$ is trivial.

\end{proof}

\begin{lemma} \label{lemmacubeisfinite}

Let $H$ be an Abelian group, $A=\Z^d$ act on $H$ and there exists $s\in H$ such that $H$ is generated by $a(s)$, $a\in A$.
Let $\epsilon_1, \dots, \epsilon_n$ denote the standard  generators of $\Z^d$.
Suppose that for any $i: 1 \le i \le d$ there exists $k>0$ and  integers $m^{(i)}_1< m^{(i)}_2< \dots m^{(i)}_k$  and $\gamma^{(i})_1, \gamma^{(i)}_2, \dots, \gamma^{(i)}_{k_i}$ such that
(for each $i$) at least two among $\gamma^{(i)}_j$ are non-zero and
$$
\sum_j \gamma^{(i)}_j  s^{(m^{(i)}_j \epsilon_i)}=0. 
$$
Then $F \otimes \mathbb{Q}$ has finite dimension.

\end{lemma}

\begin{proof} Denote by $\pi$ the tensor map from $F$ to $F \otimes \Q$.
For any $i : 1 \le i \le d$ chose $m^{(i)}_j$, $\gamma^{(i)}_j$ as in the assumption of the lemma. Without loss of generality we can assume that $\gamma^{(i)}_k\ne 0$, $\gamma^{(i)}_1 \ne 0$ for each $i$.
We have therefore
$$
s^{m^{(i)}_{k_i} \epsilon_i } = - \frac{1}{\gamma^{(i)}_{k_i}} \sum_{j=1}^{k_i-1} s^{(m^{(i)}_j \epsilon_i)}    \mbox{        and     }         s^{m^{(i)}_1 \epsilon_i} = - \frac{1}{\gamma^{(i)}_1} \sum_{j=2}^{k_i} s^{ (m^{(i)}_j \epsilon_i) }.
$$

This implies that for any $a \in \Z^d$
$$
s^{(m_{k_i} \epsilon_i  +z)} = - \frac{1}{\gamma_{k_i}} \sum_{j=1}^{k_i-1} s^{ (m^{(i)}_j \epsilon_i+z) }
\mbox{    and   }
s^{(m^{(i)}_1 \epsilon_i +z)} = - \frac{1}{\gamma_{k_i}} \sum_{j=2}^{k_i} s^{(m_j \epsilon_i +z)}.
$$

Consider the finite cuboid $P \subset \Z^d$ containing $(z_1, \dots, z_d):   m^{(i)}_1 \le z_i \le m^{(i)}_{k_i}$.
Consider the  finite set  $p(s)$, $p \in P$, and denote this finite set by $F_P$.
It is clear that the linear span of the image of $F_P$ for the tensor mapping is equal to $F \otimes \mathbb{Q}$, and hence
$F \otimes \mathbb{Q}$ is finite dimensional.

\end{proof}

Let us say that a map $\phi: X \to X$ is  weakly recurrent, if there exists a finite set $X_f \subset X$ such that for any $x$ there exists
$n\ge 1$ such that  the $n$-th iteration $\phi_{\circ n}(x) \in X_f$.

\begin{remark} If $\phi$i is recurrent (with respect to some fixed point $e \in X$), and if $d \ge 1$, then $\phi_{\circ d}$ is weakly recurrent.

\end{remark}

\begin{proof}
Put $X_f = \{  e, \phi(e), \phi_{\circ 2}(e), \dots, \phi_{\circ d}(e)    \}$. It is clear, that if $\phi$ is recurrent, than for any $x$ its iteration $\phi_{\circ d}$ visits the set
$X_f$ (infinitely many times).
\end{proof}

We return to the proof of the Theorem. We have some $x_2, \dots, x_n$ fixed, and we consider the dynamics of $\phi_u$ and $\phi_w$.

Let $T$ be the torsion subgroup of $N$. $F/T$ is torsion free.

The homomorphism $u$ preserves $T$ and thus induces the quotient homomorphism $\phi_u^{(F/T)}: F/T \to F/T$.
Let $v'$ is the image of $v(x_2, \dots, x_n)$ in $F/T$.
It is clear that
$$
\pi_{F/T} (  \phi_w(x))  =  \phi_u^{(F/T)}(\pi_{F/T}(x)) +v'.  
$$

Since finitely generated metabelian groups satisfy maximal condition for normal subgroup (Hall,  \cite{hall1954}), we know that $N$ is a normal closure of a finite set $S'$. 
This implies that  $N$ is generated by $ s^a$, $s \in S'$, $a \in A$ (where $s^a$ denotes the action of $a$ on $s$ by  conjugation). Considering the orbit of $S$ under the action of
the finite group $A_f$, we conclude that there exists a finite set $S \subset N$ such that $N$ is generated by $ s^a$, $s \in S$, $a \in \Z^d$

Let $T$ be the torsion subgroup of $N$. Observe that $T$ is a normal subgroup of $G$. Indeed, its conjugation by an element of $G$ belongs to $N$ since $N$ is normal, 
and a conjugation of a torsion element is a torsion element.

Therefore, using once again  \cite{hall1954} we conclude  that there exists a finite set $S'_T$ such that $T$ is generated as a normal subgroup by $S'_T$. In other words, $T$ is generated by $g s g^{-1}$, where
$g\in G$, $s\in S'_T$ and generated by $a s a^{-1}$, where
$a\in A$, $s\in S'_T$.
Considering, as before, the orbit of $S'_T$ under the action of the finite group $A_f$, we conclude that there exists a finite set $S_T \subset T$ such that
$T$ is generated by $a s a^{-1}$, where
$a\in \Z^d$, $s\in S_T$.

For any $s\in S$ consider the linear span  $V_s$ of  the images $a(s) , a \in \Z^d$ in $F \otimes \mathbb{Q}$. Let $d_s$ denotes the dimension of $V_s$.
Recall that $S$ is a finite set. Let $d$ be the maximum among finite values of $d_s$, $s \in S$. If all $d_s$ are infinite, we put $d =0$.
Let us show that $\phi^{F/T}_{u, \circ d+1)}(x)=0$ and $\phi^{F/T}_{w, \circ d+1}(x)=0$ for any $x \in F/T$.

Observe that since $\phi_u$ commutes with the action of  $A$ and $\phi_u(s)$ belongs to the subgroup generated by $a(s)$, $a\in \Z^d$, it suffices to consider the
action induces bt $\phi_u$ on each $V_s$. Observe that if  $s\in S$ is such that the dimentsion  $d_s$ of $V_s$ is  $< \infty$, then for any $d > d_s$  $\phi_{u, \circ d}^{F/T}(t), \phi_{w, \circ d}^{F/T}(t) = 0$ for any $t \in V_s \cap \pi(F)$, and hence also for any $ t \in V_s$
(follows from Lemma \ref{lemmafinitetensor}).
Otherwise, if  $s$ is such that $d_s=\infty$, observe that  ii) of Lemma \ref{lemmaalongcoordinates} and Lemma \ref{lemmacubeisfinite} imply that the restriction of 
$\phi_u^{(F/T)}$ and $\phi_w^{(F/T)}$ to  $V_s \cap \pi(F)$  (and hence the restrictions to $V_s$ as well) are identitcally zero.

We have shown therefore that $\phi^{F/T}_{u, \circ d+1)}(x)=0$ and $\phi^{F/T}_{w, \circ d+1}(x)=0$ for any $x \in F/T$.

We implies  that $\phi_{w, \circ d+1}(x) \in T$ for any $x \in F$.
Now we consider $\bar{w}= w_{\circ d}$, chose $\bar{u}(x, x_2, \dots, x_n)$, $\bar{v}(x_2, \dots, x_n) \in T$ and consider the dynamics
of the homomorphism $\phi_{\bar{u}}$ on $T$.

\begin{lemma} \label{lemmadimfinitefinite}

$H$ is an Abelian group such that $ m^k h = 0$ for all $h \in H$,  where $m = p_1 \dots p_l$, $p_i$ are prime.
Suppose that $H \otimes \Z/p_i\Z$  (viewed as a vector space over $\Z/p_i \Z$) is finitely dimensional for all $i$. Then $H$ is finite.

\end{lemma}

\begin{proof}  
First suppose that $l=1$, $H$ is an abelian group such that $p^k h =0$ for all $h$. $H \otimes \Z/pZ$ is finitely dimensional.
Consider $H_i= p^i H$. It is clear that $H_i$ is a subgroup of $H$ and $ H=H_0 \supset H_1 \supset \dots \supset H_k =0$.
Observe that $H_0 / H_1 = H/p H = H \otimes \Z/pZ$. By the assumption of the Lemma, this is a finitely dimensional vector space over a finite fields,
therefore $H_0/H_1$ is a finite group.
Observe that $H_i/ H_{i+1}$ is a quotient of $H_0/ H_1$ by the map which is a multiplication by $p^i$, and hence $H_i/H_{i+1}$ is finite for any $i$.
This implies that $H$ is finite.

Now consider the general case. Prove the statement by induction on $l$. The statement of the base of the induction $l=1$ is already proven.
Consider $H' = p_l^k H$. Observe that $H/H'$ satisfies the assumption of the induction step for $l-1$, and hence $H/H'$ is finite. Observe also that $H'$ satisfies
the assumption of the statement of the induction base, and thus $H'$ is finite. We conclude that $H$ is finite.

\end{proof}

We want to show that there exists a finite subgroup $T_f \subset T$ such that  $\phi_{\bar{u}, \circ k}(x)=0$ for any $x \in T$. Here $k$ is such that $m^k t =0$ for any $x \in T$ and $m$ is a product of (finitely many) distinct prime numbers.

Indeed, 
take $t \in T$ and consider $T_t$ to be the linear span of $t^a$, $ a\in A$ and $T'_t$ to be the linear span of $t^a$ $a  \in \Z^d$.
It is clear that $T_t$ is  equal to 
the linear span of $(T'_t)^a= T'_{t^a}$, $a \in A_f$. Observe $T'_{t^a}$ is isomorphic to $T'_t$ for any  $a$,     and hence 
the dimension of $T'_{t^a} \otimes \Z/p\Z$ is equal to 
that of  $T'_{t} \otimes \Z/p\Z$ (and the same statement for the tensor product with $\mathbb{Q}$.)

If $t$ is such that the dimension of $T_t \otimes \Z/pZ$ is infinite, then the dimension of $T'_{t} \otimes  \Z/p\Z$ is infinite and also  $T'_{t^a} \otimes  \Z/p\Z$ is infinite for
any $a \in A_f$. In this case   Lemma  \ref{lemmaalongcoordinates} and Lemma \ref{lemmacubeisfinite} imply
that all but possibly one coefficient of $\phi_u$ are divided by $p$. 
This implies that all such coefficients is divided by $p$, since otherwise there exist $x_2, x_3, \dots, x_n$ such that exactly one coefficients among $l_j$, defined in 
Lemma  \ref{lemmaalongcoordinates} is not divided by $p$. Observe that there exists $x \in T$ such that $x \notin pT$, and in this case any iteration of $\phi_u$, applied to $x$, does not belong to $p T$, and this is in contradiction with the fact that one of such iterations is trivial.

Hence $\phi_{u, \circ k}(x) $, as well as  $\phi_{w, \circ k}(x)$ belong
to $m_t^k T$, where $m_t$ is the product of $p_i$, such that $p_i$ is a prime divisor of $m$ and dimension of  $T_t \otimes \Z/p_i\Z$ is infinite.

Now note that for any $p_i$ the dimension of $m_t^k T \otimes \Z/p_i \Z$ is finite. Lemma \ref{lemmadimfinitefinite} implies that the subgroup $m^k T$ is finite.
Put $T_f$ be the subgroup generated by $m_t^k T_t$, $t$ is in the finite set $S_T$ ( the image of which under the action of $A$ generates $T$). $T_f$ is an Abelian group generated by a finite number of Abelian groups, and hence $T_f$ is finite. Observe that  $\phi_{u, \circ k}(t)$, as well as  $\phi_{w, \circ k}(t)$ belongs to $T_f$ for any $t\in T$.

We know therefore that there exists a finite group $T_f$ and  $C= D+k$ such that for any iterated identity $w$ of $G$ it holds $w_{\circ C}(x_1, \dots, x_n)$ belongs to
$T_f$ for any $x_1, \dots, x_n \in T_f$.

Now fix $w$ and $x_2$, \dots $x_n \in G$.
Let $B_w \subset T_f$ consists of the elements  $z \in T_f$ such that  $w_{\circ C'}{e, x_2, \dots, x_n} \in T_f$ for some $C: 0 <C'' \le C$. 
Since $w$ is  an iterated identity, for any $x \in G$ there exists $m$ such that $w_{\circ C m}(x, x_2, \dots, x_n) \in B_w$. Any orbit of the word map $w_{\circ C}(*, x_2, x_3, \dots, x_n)$ has size
at most $\# T_f$, and therefore for any $x \in G$ there exists $C_2 \le  \# T_f $ such that  $w_{\circ C C_2}(x, x_2, \dots, x_n) \in B_w$. This implies that there exists $C_3 \le C (\# T_f+1)$
such that $w_{\circ} (x, x_2, \dots, x_n)=e$.

We have shown therefore that for any iterated identity $w$ of $G$ its iterational depth $s(w,G) \le  (D+k) (\# T_f+1)$.

This completes the proof of the theorem.

\end{proof}

\section{$S$ type iterated identities } \label{sectionfurthernotions}

\begin{definition} \label{definitionSolvability}
We say that a group $G$ satisfies $S$-type (or solvability type) iterated identity $w$ if
for any $x_1, \dots, x_n \in G$ there exists $N$ such that for any $y_1, \dots y_{n^N}$ taking
 value among $x_1, \dots, x_n$
it holds
$$
w_{\star N}(y_1, y_2, \dots, y_{n^N} )=   w(w( w(\dots w(y_1, y_2, \dots y_n), w(y_{n+1}, y_{n+2}, \dots  y_{2n}) \dots ), \dots ))=e.
$$

\end{definition}

\begin{example} \label{exampleSsolvable}
 $w_0(x_1, x_2)=x_1x_2x_1^{-1}x_2^{-1}$. 
If $G$ is a solvable group, then $G$ satisfies $S$-type identity $w_0$.

More generally, let $w(x_1, \dots, x_n)$ be such that the total number of each $x_i$ in $w$ is equal to zero.
Then any solvable group $G$ satisfies the $S$ type identity $w$.
\end{example}

\begin{proof}
Let $G^{(n)}$ denotes the $n$-th group in the derived series of $G$, that is $G_1=[G,G]$ and $G_{i+1} = [G_i, G_i]$ for any $i \ge 1$.
Observe that $w$ is an identity in any Abelian group, and hence $w(x_1, \dots, x_n) \in [G,G]$ for any $x_1, \dots, x_n \in G$. Moreover, 
if $x_1, \dots, x_n \in G^{(i)}$, then $w(x_1, \dots, x_n) in G^{(i+1)}$. Since $G$ is solvable, there exists $N$ such that $G^{(N)}=N$.
Put $k=n^N$. It is clear that  $w_{\star N}(y_1, \dots, y_k) =0$
for any $y_1, \dots, y_i \in G$, 
\end{proof}

\begin{example} \label{exampleScyclic}
$\Z$ satisfies an $S$ type iterated identity $w(x_1, \dots, x_n)$ if and only if the total number of each $x_i$ in $w$ is equal to zero.
\end{example}

\begin{proof}
Suppose that the total number of some $x_i$ in $w$ is equal to $m \ne 0$. Without loss of generality we can assume that $i=1$.
Let $1$ denotes  a generator of $\Z$.
Consider  a finite set consisting of $0$ and $1$.
For each $N$ put $k=2^N$ and consider $y_1=a$, $y_2= y_3 = \dots y_k=0$.
Observe that $w_{\star N}(y_1, \dots, y_n)= m^N \ne 0$, and hence $w$ is not an $S$ type iterated identity of $G$.

Observe also, that if the total sum of each $x_i$ in $w$ is equal to zero, then $w$ is an identity of $\Z$. This implies that
$w$ is an $S$ type iterated identity of $\Z$.
\end{proof}

\begin{example}
Let $G$ be a solvable group containing at least one element of infinite order.
Then $G$  satisfies an $S$ type iterated identity $w(x_1, \dots, x_n)$ if and only if the total number of each $x_i$ in $w$ is equal to zero.
\end{example}

\begin{proof}
Follows from Examples \ref{exampleSsolvable} and \ref{exampleScyclic}.
\end{proof}

Similarly to $E$ type iterated identities, the property to satisfy $S$ type iterated identity is  closed under taking subgroups, quotients, union of increasing group sequences, and it addition this property
is closed under taking extensions:

\begin{lemma} \label{lemmasolv} \label{lemmaECStype}
\begin{enumerate}

\item If $G$ satisfies an $S$-type iterated identity $w$, then
any subgroup of $G$ has the same property.

\item If $G$ satisfies an $S$-type iterated identity $w$, then
any quotient of $G$ has the same property.

\item  If $G_1 \subset G_2 \subset G_3 \dots$, and each $G_i$ satisfies an $S$-type identity $w$, then the union
$G= \cup_{i=1}^\infty G_i$   satisfies the same property.

\item If $H$ is a normal subgroup of $G$, $H'=G/H$, $H,H'$ satisfy an $S$ type iterated identity $w$, then $G$ has the same property

\end{enumerate}
\end{lemma}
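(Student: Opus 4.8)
The plan is to verify (1)--(3) directly from Definition \ref{definitionSolvability} and then to reduce (4) to a statement about finite subsets of $H$.

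For (1), if $K\subseteq G$ and $x_1,\dots,x_n\in K$, apply the hypothesis in $G$ to get $N$ with $w_{\star N}(y_1,\dots,y_{n^N})=e$ for all $y_i\in\{x_1,\dots,x_n\}$; since $w_{\star N}$ is a group word in the $y_i$, its value is the same computed inside $K$, so the same $N$ works. For (2), given $\bar x_1,\dots,\bar x_n$ in a quotient $\bar G=G/H$, lift them to $x_1,\dots,x_n\in G$, take the $N$ provided for this tuple, and use that the quotient map $\pi$ is a homomorphism: $w_{\star N}(\bar y_1,\dots,\bar y_{n^N})=\pi\big(w_{\star N}(y_1,\dots,y_{n^N})\big)=e$, after lifting each $\bar y_j$ to the corresponding $x_k$. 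For (3), any finite tuple $x_1,\dots,x_n\in\bigcup_i G_i$ already lies in some $G_i$, and the $N$ valid there is valid in the union. (In all three items, if the tuple has fewer than $n$ distinct entries one simply repeats one of them to fill an $n$-tuple.)

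For (4), fix $g_1,\dots,g_n\in G$ and let $\bar g_i$ be their images in $Q=G/H$. Since $Q$ satisfies $S$-type $w$, there is $N_1$ with $w_{\star N_1}(y_1,\dots,y_{n^{N_1}})\in H$ for every $y_i\in\{g_1,\dots,g_n\}$. Let $Z\subseteq H$ be the finite set of all such values. Now exploit the tree structure of $w_{\star N}$: in the balanced depth-$(N_1+N_2)$ evaluation tree, the $n^{N_2}$ subtrees rooted at depth $N_2$ each have depth $N_1$ and hence evaluate into $Z$, while the top $N_2$ levels compute $w_{\star N_2}$ of those values. Thus $w_{\star(N_1+N_2)}(y_1,\dots,y_{n^{N_1+N_2}})=w_{\star N_2}(u_1,\dots,u_{n^{N_2}})$ for some $u_j\in Z$, and conversely every tuple in $Z^{n^{N_2}}$ arises. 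So it suffices to find $N_2$ with $w_{\star N_2}\equiv e$ on $Z^{n^{N_2}}$; then $N:=N_1+N_2$ certifies the $S$-type property for $(g_1,\dots,g_n)$ in $G$.

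The step I expect to be the main obstacle is precisely this last point: the hypothesis on $H$ is phrased for $n$-tuples, whereas $|Z|$ may exceed $n$, so I need the strengthening ``for every finite $Z\subseteq H$ there is $N$ with $w_{\star N}\equiv e$ on $Z^{n^N}$.'' The useful structural fact is monotonicity: because $w(e,\dots,e)=e$, if all depth-$N$ $Z$-labelings evaluate to $e$ then so do all depth-$(N+1)$ ones. Starting from this I would try to bootstrap from the $n$-tuple hypothesis, either by induction on $|Z|$ (the case $|Z|\le n$ being immediate) or by a König's-lemma/compactness argument on the tree of ``bad'' finite $Z$-labelings; note also that whenever $H$ enjoys the stronger property that one $N$ works for all of $H$ at once (e.g.\ $H$ solvable, cf.\ Example \ref{exampleSsolvable}) this difficulty disappears. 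If one prefers, one may take the finite-subset formulation as the very definition of ``$G$ satisfies $S$-type $w$,'' in which case (1)--(4) go through verbatim.
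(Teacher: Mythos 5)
Your handling of (1)--(3) matches the paper's: the paper simply calls these straightforward, and your lifting-and-restriction arguments are the expected ones. For (4), your decomposition -- go $N_1$ levels to land in $H$, collect the finite set $Z\subset H$ of possible $w_{\star N_1}$-values, then go $N_2$ more levels to kill on $Z$, and splice at depth $N_1+N_2$ -- is precisely the paper's argument.

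The obstacle you isolate is genuine, and it is worth stressing that the paper's own proof runs into the very same thing without comment. After forming the finite set $V\subset H$ of cardinality $n_2$, the paper invokes ``Since $H$ satisfies the $S$-type iterated identity $w$, there exists $N_2$ (depending on $V$) such that \dots $w_{\star N_2}(z_1,\dots,z_{k_2})=e$'' for all $z_i\in V$. But $n_2$ will typically be much larger than $n$, whereas Definition \ref{definitionSolvability} only quantifies over $n$-tuples $x_1,\dots,x_n$, i.e.\ over subsets of cardinality at most $n$; no argument is given extending it to larger finite subsets. (There is also an index slip in the paper's proof: $w_{\star N_2}$ has $n^{N_2}$ leaf slots, not $n_2^{N_2}$.) So the paper is implicitly using exactly the ``finite-subset strengthening'' you formulate.

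Your monotonicity remark is correct: since $w(e,\dots,e)=e$, if $w_{\star N}$ vanishes on all $Z^{n^N}$-tuples then $w_{\star(N+1)}$ vanishes on all $Z^{n^{N+1}}$-tuples. This lets you take the max of $N_S$ over $n$-element $S\subseteq Z$, but a single leaf-labeling of a depth-$N$ tree can use more than $n$ distinct elements of $Z$, so monotonicity alone does not yield the strengthening, and I do not see how your proposed induction on $|Z|$ or a K\"onig-type compactness argument closes the gap from the literal $n$-tuple definition. The clean fix is the one you offer at the end: read Definition \ref{definitionSolvability} as quantifying over arbitrary finite subsets of $G$. That is plainly what the author intends (and is what is actually established in Example \ref{exampleSsolvable}, where a single $N$ works uniformly over all of $G$); under that reading your argument, and the paper's, goes through.
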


\begin{proof}
(1)-(3) are straightforward.

To prove (4), observe that for any since $G/H$ satisfies $S$-type iterated identity $w$,  for any $n_1$ there exists $N_1$ such that for any
 $t_1, \dots, t_{n_1} \in G$ and any
 
 $y_1, \dots, y_{k_1} $, $k_1=n_1^{N_1}$    taking values among  $t_1, \dots, x_{t_1}$  

$$
w_{\star N_1}(y_1, \dots, y_{k_1}) =   w(w( w(\dots w(y_1, y_2, \dots y_{n_1}), w(y_{n_1+1}, y_{n_1+2}, \dots  y_{2n_1}) \dots ), \dots )) \in H.
$$

Consider a set $V \subset H$ which consists of possible values of $w_{\star k}{y_1, \dots, y_{k_1}}$, where  $k_1=n_1^{N_1}$ and all $y_i$ take value
among  $x_1, \dots, x_{n_1}$. It is clear that $V$ is finite,  since its cardinality, which we denote by $n_2$, is at most $n_1^{k_1}$.

Since $H$ satisfies the $S$ type iterated identity $w$, there exists $N_2$ (depending on $V$) such that for $k_2=n_2^{N_2}$ and any
$z_1, z_2, \dots, z_{k_2}$ taking values  in $V$ it holds

$$
w_{\star N_2}(z_1, \dots, z_{k_2}) =   w(w( w(\dots w(z_1, z_2, \dots z_{n_2}), w(z_{n_2+1}, z_{n_2+2}, \dots  z_{2n_2}) \dots ), \dots )) =e.
$$

Put $N= N_1+N_2$.  Consider $k=n_1^N$. Observe,  that for any $r_1, r_2, \dots, r_k$ taking values among  $t_1, \dots, t_{n_1}$  it holds
$$
w_{\star N}(r_1, \dots, r_{k}) = e,
$$
and this concludes the proof of 4).

\end{proof}

Given a class of groups $C$, denote by $E(C)$ the minimal class of groups, which is closed under taking four elementary operations, that is, taking subgroup, 
quotient, direct limit and group extensions.

\begin{corollary}
Let $w$ be a word
Let $C_{w,N}$ be the variety of groups, generated by the identity $w_{\star N}$, that is, the class of groups that satisfy the identity $w_{\star N}$. Put $C_w= \cup_{N=1}^\infty C_{w, N}$.
  The class of groups
satisfying an $S$ type iterated identity $w$ contains $E(C_w)$.
\end{corollary}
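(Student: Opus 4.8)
The plan is to deduce this corollary directly from Lemma \ref{lemmaECStype}. Let $\mathcal{S}_w$ denote the class of groups satisfying the $S$ type iterated identity $w$. First I would observe that $\mathcal{S}_w$ contains $C_w$: indeed, if $G$ satisfies the (non-iterated) identity $w_{\star N}$ for some fixed $N$, then in particular for every tuple $x_1,\dots,x_n$ the value $w_{\star N}(y_1,\dots,y_{n^N})$ is trivial for \emph{all} choices of $y_i$ among the $x_i$, so $G \in \mathcal{S}_w$; hence $C_{w,N}\subseteq \mathcal{S}_w$ for each $N$ and therefore $C_w=\bigcup_N C_{w,N}\subseteq \mathcal{S}_w$.

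Next I would invoke Lemma \ref{lemmaECStype}: parts (1)--(4) say precisely that $\mathcal{S}_w$ is closed under passing to subgroups, quotients, direct limits of increasing chains, and extensions. Since $E(C_w)$ is by definition the \emph{smallest} class containing $C_w$ and closed under these four operations, and $\mathcal{S}_w$ is a class containing $C_w$ that is closed under all four, minimality gives $E(C_w)\subseteq \mathcal{S}_w$, which is the assertion of the corollary.

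I do not anticipate a genuine obstacle here; the content is entirely packaged in Lemma \ref{lemmaECStype} and the corollary is a formal consequence. The only point requiring a little care is the first step, distinguishing an ordinary identity from an iterated one: a group in $C_{w,N}$ satisfies $w_{\star N}\equiv e$ on all tuples, which is a stronger condition than what the definition of $S$ type iterated identity requires (there $N$ may depend on the tuple and only tuples $y_i$ drawn from the chosen $x_1,\dots,x_n$ are tested), so the inclusion $C_{w,N}\subseteq \mathcal{S}_w$ holds trivially in the easy direction. One might also remark that closure of $\mathcal{S}_w$ under direct limits of \emph{arbitrary} directed systems (not just chains) would be needed if $E(\cdot)$ were defined with general direct limits, but as the excerpt defines $E(C)$ via ``direct limit'' in the sense matching part (3) of the lemma, no strengthening is required.
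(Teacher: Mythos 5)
Your proof is correct and is exactly the intended argument: the paper states the corollary immediately after Lemma \ref{lemmaECStype} without a separate proof, precisely because it follows by the minimality of $E(C_w)$ once one notes that the class $\mathcal{S}_w$ contains $C_w$ and, by parts (1)--(4) of that lemma, is closed under subgroups, quotients, directed unions, and extensions. Your opening observation that a group satisfying the \emph{identity} $w_{\star N}$ a fortiori satisfies the $S$-type \emph{iterated} identity $w$ (with $N$ chosen uniformly, independent of the tuple) is the only nontrivial step and is stated correctly.
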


 In general, this class is larger than $E(C_w)$.

\begin{example} \label{grigorchuknotelementary}
Let $C$ be a class of groups of bounded $2$-torsion.  We know that the fist Grigorchuk group $G_{\rm Grig}$ satisfies Solvability type (as well as Engel type) iterated identity $w(x_1)=x_1^2$. However,
$G_{\rm Grig}$ does not belong to $E(C_w)$.
\end{example}

\begin{proof}

If the class $C$ is closed with respect to taking subgroups and quotients, than it is not difficult to see that the elementary class $E(C)$ is the minimal class of groups, containing $C$ and closed
with respect to taking direct limits and extensions by a group in $C$ (see Corollary 2.1 \cite{osinelementaryclasses}).

More precisely, given an ordinal $\alpha$, denote by $E_\alpha(C)$ in the following way. Suppose we have defined $E_\beta(C)$ for any $\beta <\alpha$.
If $\alpha$ is a limit ordinal, then $E_\alpha(C)$ is defined as a union $\cup_{\beta<\alpha} (C)$. If $\alpha$ is not a limit ordinal, then $E_\alpha(C)$ is the class of groups containing direct limits of groups in $E_{\alpha-1}$ and extentsions of groups in $E_{\alpha-1}$  by a group of $C$. Then for each $E_{\alpha-1}$ , for any $\alpha$,  is closed with respect to taking subgroups and quotients (see Lemma 3.1 in \cite{osinelementaryclasses}).

Let $C=C_w$. It is clear that $C$ is the class of groups of bounded $2$-torsion and that $C$ is closed with respect to takings subgroups and quotients.

Consider a minimal ordinal $\alpha$ such that $E_\alpha$ contains a $G_{Grig}$.
It is clear that $\alpha$ is not a limit ordinal, and that $G$ can be obtained by taking a quotient from the groups in $E_{\alpha-1}$
(since $G$ is finitely generated, the last operation to obtain $G$ could not be a direct limit).
Therefore, there exists a normal subgroup $N$ in $G_{\rm Grig}$ such that  $N$ and $G/N$ belongs to $E_{\alpha-1}$. Since $G \notin E_{\alpha-1}$, $N \ne {e}$.

Any proper quotient of  $G_{Grig}$ is finite,
 $G/N$ is finite, $N$ is a normal subgroup of finite index in $G$. Any non trivial normal subgroup in 
 $G_{Grig}$ is of finite index and contains some congruence subgroup $St(n)$ (Grigorchuk, \cite{grigorchuk200 }[Proposition 10]).
 This implies that $St(n)$ belongs to $E_{\alpha-1}$. Since any congruence subgroup contains $G_{Grig}$ as a quotient (taking restriction on the first branch of level
 $n$ of the tree, \cite{grigorchukparabolic}), this also implies that $G_{Grig}$ belongs to $E_{\alpha-1}$. This is a contradiction that $\alpha$ is a minimal 
 ordinal such that $E_\alpha$ contains  $G_{Grig}$.

\end{proof}

\subsection{Quasi-varieties} 

Recall that a variety of groups is a class of groups for which a certain set  of identities hold.
This class is obviously closed under taking subgroups, homomorphic images (the same is to say: by taking quotients), and direct products.

Given a set $\Omega$ of iterated identities, we consider the class of  groups $G$ such that $G$ satisfies an $E$ type (correspondingly $S$ type)
 iterated identity $w$, for all $w\in \Omega$.
We call this class of groups an Engel type (correspondingly $S$ type) {\it quasi-variety} generated by a set of iterated identities $\Omega$.

\begin{example} Two nilpotent groups $N_1$ and $N_2$ generated the same $E$-type iterated quasi-variety if and only if they generated the same variety of groups and
$m(N_1)= m(N_2)$, where $m(G)\in \mathbb{N} \cup + \infty$ (as defined in  Example \ref{examplenilpotent}) is the minimal positive integer $m$ such that 
such that for  any element $g\in G$ there exists  an integer $t >0$ such that 
such that  
$g^{m^t}=e$ (and $+\infty$ if no such $m$ exists).

\end{example}

\begin{example} Any solvable group $G$ containing at least one element of infinite order generated the same $S$ type iterated quasi-variety as $\Z$

\end{example}

We say that an $E$-type  (correspondingly $S$-type) iterated identity  $w'$ is a corollary of  an $E$-type  (correspondingly $S$-type) 
 iterated identity  $w$ if any group satisfying an $E$-type (correspondingly $S$-type) iterated identity $w$ also satisfies $w'$. Similarly, given a set of iterated identities $\Omega$ and an iterated identity
$w$ we say that $w$ is a corollary of $\Omega$ if any group satisfying all identities from $\Omega$ also satisfies $w$.

If we consider (usual) identities instead of iterated ones, this notion is not very interesting since it simply means that the 
subgroup corresponding to one of them contains the characteristic subgroup that corresponds to the other one. 
Given an iterated identity $w$,  of $E$ type or $S$ type,
  it seems interesting to describe all corollaries of $w$. Similarly, given a set $\Omega$ of iterated identities, describe the corollaries of $\Omega$.

\subsection{Open questions} \label{openquestions}

\begin{question} 
Given a group $G$, describe the  sets of iterated identities $\Omega_E(n)$ 
and $\Omega_S(n)$ that holds in this group (e.g. for Grigorchuk groups, for Golod groups, for Basilica group).

\end{question}

The same question one can ask for families of groups (finite or infinite).
For example, what can be said about the sets of iterated identities $\Omega_E(n)$ 
and $\Omega_S(n)$ 
 that hold for all solvable groups  of the group $G$)?
For examples of iterated identities $w$ in the characterizations of finite solvable groups in Remark \ref{reformulationsolvable}, what are finitely generated groups that satisfy the iterated identity $w$
(which do not need to be solvable, see Remark \ref{remarkgolod})?

\begin{question}
For which groups $G$ the characteristics subsets  $\Omega_n$ of $G$ are subgroups of $F_n$, for all $n$? 
\end{question}

\begin{question}
Describe the $E$ type and $S$ type quasi-varieties generated by a group $G$ (or by a family of groups).
\end{question}

\begin{question}
Which groups are quasi-free with respect to the iteration of identities? 
\end{question}

\begin{question}
Are free Burnside group $B_{n,p}$on $n$ generators bounded? Are free groups of a varieties of groups generated by a single identity bounded?
\end{question}

\begin{question} For which sets $\Omega \subset F_k$ there exists a group $G$ such that the set of iterated identities on $k$ variables of the group $G$ is equal to $\Omega$?

\end{question}

\begin{question}

Let $r$ be an odd prime number.
For an integer $n$ consider $w_n (x_1, x_2) = (x_1^{rn} x_2^{rn} x_1^{-rn} x_2^{-rn})^n$
Adyan shows that if we fix $n$ large enough ($n \ge 4381$ in \cite{adyan1970}    and  $n \ge 1003$   in \cite{adyanbook}[page 297] ) then the following
identities are independent , that is, none of them follows from the others.
 
 Is it true that $w_n$ , for $n$ large enough, are independent both as Engel type iterated identities and Solvability type iterated identities?

\end{question}

\end{document}